\theoremstyle{plain}
\newtheorem{theorem}{Theorem}[section]
\newtheorem{lemma}{Lemma}[section]
\newtheorem{corollary}{Corollary}[section]
\newtheorem{proposition}{Proposition}[section]
\theoremstyle{remark}
\newtheorem{remark}{Remark}[section]
\theoremstyle{definition}
\begin{document}

\title[Jacobi and generalized Chebyshev polynomials]{Nonnegative and strictly positive linearization of Jacobi and generalized Chebyshev polynomials}

\author{Stefan Kahler}

\address{Department of Mathematics, Chair for Mathematical Modelling, Chair for Mathematical Modeling of Biological Systems, Technical University of Munich, Boltzmannstr. 3, 85747 Garching b. M\"{u}nchen, Germany}

\address{Lehrstuhl A f\"{u}r Mathematik, RWTH Aachen University, 52056 Aachen, Germany}

\email{stefan.kahler@mathA.rwth-aachen.de}

\thanks{The research was begun when the author worked at Technical University of Munich, and the author gratefully acknowledges support from the graduate program TopMath of the ENB (Elite Network of Bavaria) and the TopMath Graduate Center of TUM Graduate School at Technical University of Munich. The main part of the research was done at RWTH Aachen University.}

\date{\today}

\begin{abstract}
In the theory of orthogonal polynomials, as well as in its intersection with harmonic analysis, it is an important problem to decide whether a given orthogonal polynomial sequence $(P_n(x))_{n\in\mathbb{N}_0}$ satisfies nonnegative linearization of products, i.e., the product of any two $P_m(x),P_n(x)$ is a conical combination of the polynomials $P_{|m-n|}(x),\ldots,P_{m+n}(x)$. Since the coefficients in the arising expansions are often of cumbersome structure or not explicitly available, such considerations are generally very nontrivial. In 1970, G. Gasper was able to determine the set $V$ of all pairs $(\alpha,\beta)\in(-1,\infty)^2$ for which the corresponding Jacobi polynomials $(R_n^{(\alpha,\beta)}(x))_{n\in\mathbb{N}_0}$, normalized by $R_n^{(\alpha,\beta)}(1)\equiv1$, satisfy nonnegative linearization of products. In 2005, R. Szwarc asked to solve the analogous problem for the generalized Chebyshev polynomials $(T_n^{(\alpha,\beta)}(x))_{n\in\mathbb{N}_0}$, which are the quadratic transformations of the Jacobi polynomials and orthogonal w.r.t. the measure $(1-x^2)^{\alpha}|x|^{2\beta+1}\chi_{(-1,1)}(x)\,\mathrm{d}x$. In this paper, we give the solution and show that $(T_n^{(\alpha,\beta)}(x))_{n\in\mathbb{N}_0}$ satisfies nonnegative linearization of products if and only if $(\alpha,\beta)\in V$, so the generalized Chebyshev polynomials share this property with the Jacobi polynomials. Moreover, we reconsider the Jacobi polynomials themselves, simplify Gasper's original proof and characterize strict positivity of the linearization coefficients. Our results can also be regarded as sharpenings of Gasper's one.
\end{abstract}

\keywords{Jacobi polynomials, generalized Chebyshev polynomials, Fourier expansions, nonnegative linearization, strictly positive linearization, linearization coefficients}

\subjclass[2010]{Primary 33C45; Secondary 33C47, 42C10}

\maketitle

\numberwithin{equation}{section}

\section{Introduction}\label{sec:intro}

\subsection{Motivation}

In the theory of orthogonal polynomials and special functions, it is of special interest under which conditions (general or referring to a specific class of polynomials) a suitably normalized orthogonal polynomial sequence $(P_n(x))_{n\in\mathbb{N}_0}\subseteq\mathbb{R}[x]$ satisfies the `nonnegative linearization of products' property, i.e., the product of any two polynomials $P_m(x),P_n(x)$ is contained in the conical hull of $\{P_k(x):k\in\mathbb{N}_0\}$. In other words, nonnegative linearization of products means that the linearization coefficients appearing in the (Fourier) expansions of $P_m(x)P_n(x)$ w.r.t. the basis $\{P_k(x):k\in\mathbb{N}_0\}$ are always nonnegative. One reason for the intense study of this property, and for the extensive corresponding literature, is a fruitful relation to harmonic analysis, which will be briefly recalled below.\\

Given a specific sequence $(P_n(x))_{n\in\mathbb{N}_0}$, deciding whether nonnegative linearization of products is satisfied or not may be quite difficult, however: in many cases, the aforementioned linearization coefficients are not explicitly known, or explicit representations are of involved, cumbersome or inappropriate structure. In a series of papers starting with \cite{Sz92b} and extending earlier work of Askey \cite{As70}, Szwarc et al. have provided some general criteria that can be helpful. However, to our knowledge there is no general criterion which is strong enough to cover the full parameter range for which the Jacobi polynomials
\begin{equation*}
R_n^{(\alpha,\beta)}(x)={}_2F_1\left(\left.\begin{matrix}-n,n+\alpha+\beta+1 \\ \alpha+1\end{matrix}\right|\frac{1-x}{2}\right)=\sum_{k=0}^n\frac{(-n)_k(n+\alpha+\beta+1)_k}{(\alpha+1)_k}\frac{(1-x)^k}{2^k k!}
\end{equation*}
\cite[(9.8.1)]{KLS10} satisfy nonnegative linearization of products.\footnote{Recall that $(a)_0=1$ and $(a)_n=\prod_{k=1}^n(a+k-1)\;(n\in\mathbb{N})$. Since $(R_n^{(\alpha,\beta)}(x))_{n\in\mathbb{N}_0}$ is normalized such that $R_n^{(\alpha,\beta)}(1)\equiv1$, one has $R_n^{(\alpha,\beta)}(x)=n!P_n^{(\alpha,\beta)}(x)/(\alpha+1)_n$ if $(P_n^{(\alpha,\beta)}(x))_{n\in\mathbb{N}_0}$ denotes the standard normalization of the Jacobi polynomials.} Moreover, we are not aware of an explicit representation of the corresponding linearization coefficients which allows one to easily identify \textit{all} pairs $(\alpha,\beta)\in(-1,\infty)^2$ such that nonnegative linearization of products is fulfilled.\\

In some more detail, the situation concerning Jacobi polynomials is as follows: starting with the full (positive-definite case) parameter range $(\alpha,\beta)\in(-1,\infty)^2$ and defining
\begin{equation}\label{eq:abdef}
a:=\alpha+\beta+1>-1,\;b:=\alpha-\beta\in(-1-a,1+a)
\end{equation}
and a proper subset $V$ of $[-1/2,\infty)\times(-1,\infty)$ via
\begin{equation}\label{eq:Voriginal}
V:=\left\{(\alpha,\beta)\in(-1,\infty)^2:a(a+5)(a+3)^2\geq(a^2-7a-24)b^2,b\geq0\right\},
\end{equation}
Gasper showed the following \cite[Theorem 1]{Ga70b} (or \cite[Theorem 3]{Ga75}):

\begin{theorem}\label{thm:gasper}
Let $\alpha,\beta>-1$. The following are equivalent:
\begin{enumerate}
\item[\rm(i)] $(R_n^{(\alpha,\beta)}(x))_{n\in\mathbb{N}_0}$ satisfies nonnegative linearization of products, i.e., all $g_R(m,n;k)$ given by the expansions
\begin{equation*}
R_m^{(\alpha,\beta)}(x)R_n^{(\alpha,\beta)}(x)=\sum_{k=|m-n|}^{m+n}g_R(m,n;k)R_k^{(\alpha,\beta)}(x)
\end{equation*}
are nonnegative.
\item[\rm(ii)] $(\alpha,\beta)\in V$.
\end{enumerate}
\end{theorem}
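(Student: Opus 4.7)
My plan is to establish both implications via the three-term recurrence
\begin{equation*}
x R_n^{(\alpha,\beta)}(x) = a_n R_{n+1}^{(\alpha,\beta)}(x) + b_n R_n^{(\alpha,\beta)}(x) + c_n R_{n-1}^{(\alpha,\beta)}(x),
\end{equation*}
whose coefficients $a_n,b_n,c_n$ are explicit rational functions of $n,\alpha,\beta$, combined with inductive positivity arguments in the spirit of Szwarc and his coauthors.

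I would begin with the more elementary direction (i)$\Rightarrow$(ii). Iterating the three-term recurrence twice and comparing the result with $R_2^{(\alpha,\beta)}(x)R_n^{(\alpha,\beta)}(x)=\sum_k g_R(2,n;k)R_k^{(\alpha,\beta)}(x)$ yields a closed-form rational expression for $g_R(2,n;n-2)$ in $\alpha,\beta,n$. Requiring this expression to be nonnegative for every admissible $n$ and reading off its $n\to\infty$ asymptotics should extract both the sign condition $b\geq 0$ (from the leading term) and, after a finer small-$n$ analysis, the cubic condition in \eqref{eq:Voriginal}. If $g_R(2,n;n-2)$ alone does not pin down both constraints, I would bring in $g_R(3,n;n-3)$ or the general extremal coefficient $g_R(m,n;m+n-4)$ to do so.

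For the sufficiency (ii)$\Rightarrow$(i), the plan is to substitute the expansion $R_m^{(\alpha,\beta)}R_n^{(\alpha,\beta)}=\sum_k g_R(m,n;k)R_k^{(\alpha,\beta)}$ into the three-term recurrence applied in the first factor and to equate coefficients with respect to the basis $(R_k^{(\alpha,\beta)})_{k\in\mathbb{N}_0}$. This produces, for each fixed $n,k$, a three-term linear recursion for $m\mapsto g_R(m,n;k)$ whose coefficients pair $a_m,b_m,c_m$ against $a_k,b_k,c_k$. With base cases $g_R(0,n;k)=\delta_{n,k}$ and $g_R(1,n;k)$ read off directly from the recurrence, I would prove $g_R(m,n;k)\geq 0$ by induction on $m$ via a comparison principle: the natural ansatz is an auxiliary quantity of the form $\lambda_k g_R(m,n;k)+\mu_k g_R(m-1,n;k)$ whose nonnegativity is propagated by the recursion precisely when $(\alpha,\beta)\in V$.

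The main obstacle is the sufficiency direction near the boundary curve $a(a+5)(a+3)^2=(a^2-7a-24)b^2$ of $V$. Off-the-shelf Szwarc-type criteria, which depend only on monotonicity or convexity of $a_n,b_n,c_n$, are visibly too coarse to detect this cubic relation, so the heart of the argument is the correct choice of the auxiliary sequence $(\lambda_k,\mu_k)$ and the verification that the sign-preservation inequalities it produces reduce exactly to the defining inequalities of $V$. Splitting the verification by subregimes, namely $b=0$ (which collapses to the ultraspherical case), $a^2-7a-24\leq 0$ (where the cubic condition is automatic and a plain Szwarc-type bound should suffice), and the generic remaining region, should keep the algebraic case analysis tractable while pinpointing where the sharp inequality is really used.
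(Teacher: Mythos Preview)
Your proposal has gaps in both directions.

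\textbf{Necessity.} The coefficient you single out, $g_R(2,n;n-2)$, is the extremal one $g_R(m,n;|m-n|)$, and this is always strictly positive for any $(\alpha,\beta)\in(-1,\infty)^2$ (it equals $h(|m-n|)\int R_m R_n R_{|m-n|}\,d\mu_R$, and the general theory already gives $g(m,n;|m-n|)>0$). So no constraint on $(\alpha,\beta)$ can be read off from it. The coefficients that actually do the work are $g_R(1,1;1)=b_1^R$, whose nonnegativity forces $b\ge 0$, and $g_R(2,2;2)$, whose explicit form
\[
g_R(2,2;2)=\frac{4\bigl[(a^2+2b^2+3a)(a+3)(a+5)-3(a+1)(a+2)b^2\bigr]}{(a+3)(a+5)(a+6)(a+b+1)(a+b+3)}
\]
yields precisely the second defining inequality of $V$. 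No asymptotics in $n$ are involved.

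\textbf{Sufficiency.} The recurrence you set up is in $m$, obtained by multiplying by $R_1$ in the first factor. This is exactly the framework underlying Szwarc's criteria, and as you yourself note, those criteria are too coarse to detect the boundary of $V$; the paper states explicitly that no general criterion of this type is known to cover the full region. Your hope that a suitable linear auxiliary $\lambda_k g_R(m,n;k)+\mu_k g_R(m-1,n;k)$ can be propagated is not substantiated, and there is no evidence that such a choice exists for $(\alpha,\beta)\in V\setminus\Delta$.

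The actual mechanism (Gasper's, simplified in the paper) is structurally different: one fixes $m$ and $n=m+s$ and uses the \emph{Hylleraas recurrence in the summation index} $j$,
\[
\theta(j)\,g_R(m,m+s;s+j+1)=\iota(j)\,g_R(m,m+s;s+j)+\kappa(j)\,g_R(m,m+s;s+j-1),
\]
with explicit $\theta,\iota,\kappa$. The four boundary values at $j=0,1,2m-1,2m$ are known in closed form; one then shows directly that $g_R(m,m+s;s+2)$ and $g_R(m,m+s;s+2m-2)$ are nonnegative (this is where the defining inequality of $V$ enters, via the decompositions \eqref{eq:simplifygasperI}--\eqref{eq:simplifygasperII}), and finishes by a two-sided induction in $j$ using that $\iota(m,m+s;\cdot)$ changes sign at most once (Lemma~\ref{lma:gRmu}). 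None of this is visible from the $m$-recurrence you propose.
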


Although Theorem~\ref{thm:gasper} can be regarded as a ``classical result'' nowadays, it is still highly topical and was used in the recent publications \cite{BGM18} (dealing with certain strictly positive definite functions) and \cite{GM18} (dealing with semigroups defined by Fourier-Jacobi series), for instance. Also \cite{BDHSS19}, dealing with spherical codes, uses \cite{Ga70b}.\\

In this paper, we find an analogue to Gasper's result Theorem~\ref{thm:gasper} for the class of generalized Chebyshev polynomials (Theorem~\ref{thm:mainfull}), which are the quadratic transformations of the Jacobi polynomials; for all $\alpha,\beta>-1$, the sequence of generalized Chebyshev polynomials $(T_n^{(\alpha,\beta)}(x))_{n\in\mathbb{N}_0}$ is given by
\begin{align}
\label{eq:genchebeven} T_{2n}^{(\alpha,\beta)}(x)&:=R_n^{(\alpha,\beta)}(2x^2-1),\\
\label{eq:genchebodd} T_{2n+1}^{(\alpha,\beta)}(x)&:=x R_n^{(\alpha,\beta+1)}(2x^2-1)
\end{align}
\cite[Chapter V 2 (G)]{Ch78}. This solves a problem posted in \cite[Section 5]{Sz05} by Szwarc who asked to determine the parameter range for which these polynomials satisfy nonnegative linearization of products. Since our result will immediately imply the nontrivial direction (cf. below) of Gasper's result Theorem~\ref{thm:gasper}, it can also be regarded as a sharpening of Theorem~\ref{thm:gasper}.\\

Moreover, we shall characterize strict positivity\footnote{In the following, we just write `positive' for `strictly positive' etc. If $0$ shall be included, we use `nonnegative'.} of the linearization coefficients $g_R(m,n;k)$ (Theorem~\ref{thm:gasperpositivevariant}); an analogous result for the generalized Chebyshev polynomials cannot exist due to symmetry. On the one hand, this characterization will immediately imply the nontrivial direction of Gasper's result Theorem~\ref{thm:gasper}, too. On the other hand, our proof of positive linearization is based on Gasper's approach \cite{Ga70b} but shorter and more elementary, so it can be regarded both as another sharpening and as a simplification. In Gasper's original proof, the most computational part was establishing the nonnegativity of the coefficients $g_R(m,n;|m-n|+2)$ and $g_R(m,n;m+n-2)$ (provided $(\alpha,\beta)\in V$). We will get rid of these long computations and provide a more explicit approach. Furthermore, we give characterizations concerning a certain oscillatory behavior of the $g_R(m,n;k)$.

\subsection{Underlying setting}

Let us briefly describe the basic underlying setting: in this paper, we consider sequences $(P_n(x))_{n\in\mathbb{N}_0}\subseteq\mathbb{R}[x]$ with $\mathrm{deg}\;P_n(x)=n$ which are orthogonal w.r.t. a probability (Borel) measure $\mu$ on the real line with $|\mathrm{supp}\;\mu|=\infty$ and $\mathrm{supp}\;\mu\subseteq(-\infty,1]$. Moreover, we assume $(P_n(x))_{n\in\mathbb{N}_0}$ to be normalized by $P_n(1)\equiv1$. This normalization is always possible because the assumptions on $\mathrm{supp}\;\mu$ yield that all zeros are (real and) located in $(-\infty,1)$ (see \cite{Ch78,La05} for standard results on orthogonal polynomials and on corresponding expansions). Orthogonality is then given by
\begin{equation}\label{eq:orthogonalityrelation}
\int_{\mathbb{R}}\!P_m(x)P_n(x)\,\mathrm{d}\mu(x)=\frac{\delta_{m,n}}{h(n)}
\end{equation}
with some function $h:\mathbb{N}_0\rightarrow(0,\infty)$ satisfying $h(0)=1$.\\

Under these conditions, nonnegative linearization of products corresponds to the property that the product of any two polynomials $P_m(x),P_n(x)$ is a convex combination of $P_{|m-n|}(x),\ldots,P_{m+n}(x)$, or to the nonnegativity of all linearization coefficients $g(m,n;k)$ defined by the expansions
\begin{equation}\label{eq:productlinear}
P_m(x)P_n(x)=\sum_{k=|m-n|}^{m+n}g(m,n;k)P_k(x),
\end{equation}
where $\sum_{k=|m-n|}^{m+n}g(m,n;k)=1$.\\

Observe that the summation in \eqref{eq:productlinear} starts with $k=|m-n|$, which is an obvious consequence of orthogonality. It is also easy to see that $g(m,n;m+n)>0$ (as the assumptions on the support and the normalization yield that the polynomials $P_n(x)$ have positive leading coefficients). Using \eqref{eq:orthogonalityrelation} and \eqref{eq:productlinear}, one clearly has
\begin{equation}\label{eq:lincoeffh}
g(m,n;k)=h(k)\int_{\mathbb{R}}\!P_m(x)P_n(x)P_k(x)\,\mathrm{d}\mu(x)
\end{equation}
and $h(n)=1/g(n,n;0)$. As an immediate consequence of \eqref{eq:lincoeffh} and the observation $g(m,n;m+n)>0$ above, we obtain that also $g(m,n;|m-n|)>0$. In particular, it is well-known that $(P_n(x))_{n\in\mathbb{N}_0}$ satisfies the three-term recurrence relation $P_0(x)=1$, $P_1(x)=(x-b_0)/a_0$,
\begin{equation*}
P_1(x)P_n(x)=a_n P_{n+1}(x)+b_n P_n(x)+c_n P_{n-1}(x)\;(n\in\mathbb{N}),
\end{equation*}
where $a_0>0$, $b_0=1-a_0$ and the sequences $(a_n)_{n\in\mathbb{N}},(c_n)_{n\in\mathbb{N}}\subseteq(0,\infty)$, $(b_n)_{n\in\mathbb{N}}\subseteq\mathbb{R}$ satisfy $a_n+b_n+c_n=1\;(n\in\mathbb{N})$.\\

Throughout the paper, like in Theorem~\ref{thm:gasper} we use additional appropriate subscripts or superscripts when referring to the Jacobi polynomials $(R_n^{(\alpha,\beta)}(x))_{n\in\mathbb{N}_0}$ or generalized Chebyshev polynomials $(T_n^{(\alpha,\beta)}(x))_{n\in\mathbb{N}_0}$. Moreover, we use an additional superscript ``$+$'' when referring to the sequence $(R_n^{(\alpha,\beta+1)}(x))_{n\in\mathbb{N}_0}$. For instance, there will occur linearization coefficients $g_R(m,n;k)$, $g_R^+(m,n;k)$ and $g_T(m,n;k)$. Observe that a transition from $\beta$ to $\beta+1$, which will play a crucial role in this paper, corresponds to a transition from $(a,b)$ to $(a+1,b-1)$ in the notation of \eqref{eq:abdef}.\\

In the literature, the nonnegativity of all linearization coefficients $g(m,n;k)$ is sometimes called `property (P)'. For the sake of clarity, we shall say `nonnegative linearization of products' throughout the paper. It is well-known that this property enforces the uniqueness of $\mu$, and it is clear that it implies $(a_n)_{n\in\mathbb{N}},(c_n)_{n\in\mathbb{N}}\subseteq(0,1)$ and $(b_n)_{n\in\mathbb{N}}\subseteq[0,1)$. Furthermore, nonnegative linearization of products gives rise to a certain polynomial hypergroup structure, including associated Banach ($L^1$-) algebras and the fruitful possibility of applying Gelfand's theory, which yields a deep and rich harmonic and Fourier analysis \cite{La83}. Hence, nonnegative linearization of products is not only of interest with regard to a better understanding of general or specific orthogonal polynomials, but also has high relevance for functional and abstract harmonic analysis and, in particular, for the theory of Banach algebras. Within such polynomial hypergroups, the classes of Jacobi polynomials and generalized Chebyshev polynomials play a special role concerning product formulas and duality structures \cite{CMS91,CS90,Ga71,Ga72,La80,La83,Ob17}.\footnote{In this paper, the hypergroup context appears only as a kind of additional motivation to study nonnegative linearization of products. In particular, it shows the high relevance for harmonic analysis. The paper can be read without knowledge on hypergroups, however. Roughly speaking, a hypergroup is a generalization of a (locally compact) group which allows the convolution of two Dirac measures to be a probability measure which satisfies certain compatibility and non-degeneracy properties but does not have to be a Dirac measure again (see \cite{BH95,La05} for precise axioms).}

\subsection{Previous results and outline of the paper}

Let us come back to the Jacobi and generalized Chebyshev polynomials. Concerning Theorem~\ref{thm:gasper}, it is not difficult to see that (ii) is necessary for (i). In fact, Gasper has shown that if $b<0$, then $g_R(1,1;1)<0$, whereas if $b\geq0$ and $(\alpha,\beta)\notin V$, then $g_R(2,2;2)<0$ \cite{Ga70b}. The implication ``(ii) $\Rightarrow$ (i)'' is very nontrivial, however. The subcase $(\alpha,\beta)\in\Delta$, where $\Delta\subsetneq V$ is given by
\begin{equation*}
\Delta:=\{(\alpha,\beta)\in(-1,\infty)^2:a,b\geq0\}=\left\{(\alpha,\beta)\in(-1,\infty)^2:\alpha\geq\left|\beta+\frac{1}{2}\right|-\frac{1}{2}\right\},
\end{equation*}
is easier and was already solved in \cite{Ga70a}, and concerning the special case $\alpha\geq\beta\geq-1/2$ Koornwinder gave a less computational proof via addition formulas \cite{Ko78}. Moreover, if $(\alpha,\beta)\in\Delta$, then the nonnegativity of the $g_R(m,n;k)$ can be seen via explicit representations in terms of ${}_9F_8$ hypergeometric series given by Rahman \cite[(1.7) to (1.9)]{Ra81a}.\footnote{The formulas \cite[(1.7) to (1.9)]{Ra81a} contain small mistakes. We will correct them in the appendix (see \eqref{eq:Rahmaneven} to \eqref{eq:Rahmanspecial}).} Alternatively, the case $(\alpha,\beta)\in\Delta$ can also be obtained from one of the aforementioned general criteria of Szwarc \cite{Sz92b}. The simplest subcase is given by $\alpha=\beta\geq-1/2$ (these are the ultraspherical polynomials), for which the nonnegativity of the $g_R(m,n;k)$ follows from Dougall's formula (see \cite[Theorem 6.8.2]{AAR99} or \cite{La05})
\begin{align*}
&R_m^{(\alpha,\alpha)}(x)R_n^{(\alpha,\alpha)}(x)=\\
&=\sum_{j=0}^{\min\{m,n\}}j!\left(\alpha+\frac{1}{2}\right)_j\binom{m}{j}\binom{n}{j}\\
&\quad\times\frac{\left(m+n+\alpha+\frac{1}{2}-2j\right)\left(\alpha+\frac{1}{2}\right)_{m-j}\left(\alpha+\frac{1}{2}\right)_{n-j}(2\alpha+1)_{m+n-j}}{\left(m+n+\alpha+\frac{1}{2}-j\right)\left(\alpha+\frac{1}{2}\right)_{m+n-j}(2\alpha+1)_m(2\alpha+1)_n}R_{m+n-2j}^{(\alpha,\alpha)}(x),
\end{align*}
where $\alpha>-1/2$; for $\alpha=\beta=-1/2$, the linearization coefficients reduce to $1/2$.\\

Besides the original proof given in \cite{Ga70b}, Gasper found a very different one in \cite{Ga83}. The second proof is based on the continuous $q$-Jacobi polynomials and explicit corresponding linearization formulas in terms of ${}_{10}\phi_9$ basic hypergeometric series due to Rahman \cite{Ra81b}. Both \cite{Ga70b} and \cite{Ga83} rely on Descartes' rule of signs (which will be avoided in this paper; cf. the remarks following Lemma~\ref{lma:gRmu}). In the following, we will always refer to the first proof \cite{Ga70b}.\\

For our purposes, it will be more convenient to rewrite $V$ \eqref{eq:Voriginal} as
\begin{equation}\label{eq:Vrewritten}
V=\left\{(\alpha,\beta)\in(-1,\infty)^2:a^2+2b^2+3a\geq3\frac{(a+1)(a+2)}{(a+3)(a+5)}b^2,b\geq0\right\}.
\end{equation}
The small region $V\backslash\Delta$ is bounded on the left by a curve $c$ in the $(\alpha,\beta)$-plane which has the following properties: $c$ starts at the point $(\alpha,\beta)=(-11/8+\sqrt{73}/8,-1)\approx(-0.307,-1)$, approaches the line $\alpha+\beta+1=0$ tangentially and meets this line at the point $(\alpha,\beta)=(-1/2,-1/2)$ (which corresponds to the Chebyshev polynomials of the first kind) \cite{Ga70b}. The angle between the line $\beta=-1$ and $c$ can easily be computed and is given by $\approx87.6^{\circ}$ (in particular, $c$ cannot be written as $\beta=f(\alpha)$ with a single function $f$).\\

Despite the more involved arguments which are required to establish nonnegative linearization of products for $V\backslash\Delta$, from a harmonic analytic point of view there is no reason for restricting to $\Delta$ when studying the associated hypergroups or $L^1$-algebras; we are not aware of a general advantage or benefit a restriction to $\Delta$ would be accompanied with. For instance, in \cite[Theorem 3.1]{Ka15} (or \cite[Theorem 3.1]{Ka16b}) we have shown that the $L^1$-algebra\footnote{in the polynomial hypergroup sense \cite{La83,La05}} associated with $(R_n^{(\alpha,\beta)}(x))_{n\in\mathbb{N}_0}$, $(\alpha,\beta)\in V$, is weakly amenable\footnote{i.e., there exist no nonzero bounded derivations into the dual module $\ell^{\infty}$ (which acts on the $L^1$-algebra via convolution) \cite{La07,La09b}} if and only if $\alpha<0$, and the proof for $\{(\alpha,\beta)\in V:a=0\}\subseteq\Delta$ does not differ from the proof for $V\backslash\Delta$: both cases are traced back to the interior of $\Delta$ via the same argument using inheritance via homomorphisms. This example also shows that important Banach algebraic features like amenability properties may strongly vary even within the same specific class of orthogonal polynomials satisfying nonnegative linearization of products. Hence, also in considering other example classes it is desirable to find various---or even \textit{all}---sequences $(P_n(x))_{n\in\mathbb{N}_0}$ such that nonnegative linearization of products holds (e.g., by characterizing a corresponding parameter range like in Gasper's result Theorem~\ref{thm:gasper} or in our extension to generalized Chebyshev polynomials in Theorem~\ref{thm:mainfull} below).\\

It is an obvious consequence of Theorem~\ref{thm:gasper} and \eqref{eq:genchebeven} that $(T_n^{(\alpha,\beta)}(x))_{n\in\mathbb{N}_0}$ cannot satisfy nonnegative linearization of products if $(\alpha,\beta)\notin V$; moreover, Szwarc has already shown that nonnegative linearization of products is fulfilled for all $(\alpha,\beta)\in\Delta$ \cite{Sz92b,Sz05}. The special case $\alpha\geq\beta+1$ was already shown in \cite{La83}. The simplest subcase is given by $\alpha\geq-1/2\wedge\beta=-1/2$, which can be obtained as above via Dougall's formula; note that $T_n^{(\alpha,-1/2)}(x)=R_n^{(\alpha,\alpha)}(x)$ for all $n\in\mathbb{N}_0$, or in other words: the ultraspherical polynomials are the common subclass of the Jacobi and the generalized Chebyshev polynomials.\\

In Theorem~\ref{thm:mainfull}, we will obtain that $(T_n^{(\alpha,\beta)}(x))_{n\in\mathbb{N}_0}$ satisfies nonnegative linearization of products if and only if $(\alpha,\beta)\in V$. Hence, the generalized Chebyshev polynomials share this property with the Jacobi polynomials. Having in mind the interesting structure of \eqref{eq:genchebodd} (where $\beta+1$ instead of $\beta$ appears on the right-hand side), we will also precisely characterize the pairs $(\alpha,\beta)\in(-1,\infty)^2$ for which all $g_T(m,n;k)$ with at least one odd entry $m,n$ are nonnegative (Theorem~\ref{thm:mainodd}), and we will describe the geometry of the resulting set $V^{\prime}\supsetneq V$.\\

The main results and proofs are given in Section~\ref{sec:jacobi} (Jacobi polynomials) and Section~\ref{sec:gencheb} (generalized Chebyshev polynomials). At several stages, our arguments are based on appropriate decompositions of multivariate polynomials. To find such decompositions (more precisely, appropriate nested sums of suitable factorizations), we also used computer algebra systems (Maple). However, the final proofs can be understood without any computer usage.

\section{Linearization of the product of Jacobi polynomials: sharpening and simplification of Gasper's result}\label{sec:jacobi}

Let $\alpha,\beta>-1$, and let $a,b$ be defined as in Section~\ref{sec:intro}. We first recall that $(R_n^{(\alpha,\beta)}(x))_{n\in\mathbb{N}_0}$ is equivalently given by the orthogonalization measure
\begin{equation*}
\mathrm{d}\mu_R(x)=\frac{\Gamma(\alpha+\beta+2)}{2^{\alpha+\beta+1}\Gamma(\alpha+1)\Gamma(\beta+1)}(1-x)^{\alpha}(1+x)^{\beta}\chi_{(-1,1)}(x)\,\mathrm{d}x
\end{equation*}
and the normalization $R_n^{(\alpha,\beta)}(1)\equiv1$ \cite[Chapter V 2 (B)]{Ch78} \cite[(4.0.2)]{Is09}. $(R_n^{(\alpha,\beta)}(x))_{n\in\mathbb{N}_0}$ satisfies the three-term recurrence relation $R_0^{(\alpha,\beta)}(x)=1$, $R_1^{(\alpha,\beta)}(x)=(x-b_0^R)/a_0^R$,
\begin{equation}\label{eq:recJacobi}
R_1^{(\alpha,\beta)}(x)R_n^{(\alpha,\beta)}(x)=a_n^R R_{n+1}^{(\alpha,\beta)}(x)+b_n^R R_n^{(\alpha,\beta)}(x)+c_n^R R_{n-1}^{(\alpha,\beta)}(x)\;(n\in\mathbb{N})
\end{equation}
with
\begin{equation}\label{eq:reccoeffJacobi}
\begin{split}
a_0^R&=\frac{2\alpha+2}{\alpha+\beta+2}=\frac{a+b+1}{a+1},\\
a_n^R&=\frac{(\alpha+\beta+2)(n+\alpha+\beta+1)(n+\alpha+1)}{(\alpha+1)(2n+\alpha+\beta+1)(2n+\alpha+\beta+2)}=\frac{(a+1)(n+a)(2n+a+b+1)}{(a+b+1)(2n+a)(2n+a+1)}\;(n\in\mathbb{N}),\\
b_0^R&=-\frac{\alpha-\beta}{\alpha+\beta+2}=-\frac{b}{a+1},\\
b_n^R&=\frac{2(\alpha-\beta)n(n+\alpha+\beta+1)}{(\alpha+1)(2n+\alpha+\beta)(2n+\alpha+\beta+2)}=\frac{4b n(n+a)}{(a+b+1)(2n+a-1)(2n+a+1)}\;(n\in\mathbb{N}),\\
c_n^R&\equiv\frac{(\alpha+\beta+2)n(n+\beta)}{(\alpha+1)(2n+\alpha+\beta)(2n+\alpha+\beta+1)}=\frac{(a+1)n(2n+a-b-1)}{(a+b+1)(2n+a-1)(2n+a)}\\
\end{split}
\end{equation}
\cite[(4)]{Ga70a}. It is well-known that
\begin{equation}\label{eq:alphabetachange}
R_n^{(\beta,\alpha)}(x)=(-1)^n\frac{(\alpha+1)_n}{(\beta+1)_n}R_n^{(\alpha,\beta)}(-x)
\end{equation}
\cite[(4.1.4), (4.1.6)]{Is09}.\\

One of our central tools will be a recurrence relation for the $g_R(m,n;k)$ which is taken from \cite{Ga70b} and relies on earlier work of Hylleraas \cite{Hy62}. Let $n\geq m\geq1$; following \cite{Ga70b}, we use a more convenient notation and write
\begin{align*}
n&=m+s,\\
k&=s+j
\end{align*}
with $s\in\mathbb{N}_0$ and $j\in\{0,\ldots,2m\}$. \cite[(2.1)]{Ga70b} states that the linearization coefficients are linked to each other via the following recursion: for $1\leq j\leq2m-1$, one has
\begin{equation}\label{eq:gRrecurrence}
\begin{split}
\theta(m,m+s;j)g_R(m,m+s;s+j+1)&=\iota(m,m+s;j)g_R(m,m+s;s+j)\\
&\quad+\kappa(m,m+s;j)g_R(m,m+s;s+j-1),
\end{split}
\end{equation}
where $\theta(m,m+s;.),\iota(m,m+s;.),\kappa(m,m+s;.):\{1,\ldots,2m-1\}\rightarrow\mathbb{R}$ read
\begin{align}
\label{eq:gRlambda} \theta(m,m+s;j):&=(2m-j+a-1)(2m+2s+j+a+1)\\
\notag &\quad\times\frac{(2s+j+1)(2s+2j+a-b+1)}{(2s+2j+a+1)(2s+2j+a+2)}(j+1),\\
\label{eq:gRmu} \iota(m,m+s;j):&=b\left[(2m-j)(2m+2s+j+2a)\frac{2s+j+1}{2s+2j+a+1}(j+1)\right.\\
\notag &\quad\quad\left.-(2m-j+1)(2m+2s+j+2a-1)\frac{2s+j}{2s+2j+a-1}j\right],\\
\label{eq:gRnu} \kappa(m,m+s;j):&=(2m-j+1)(2m+2s+j+2a-1)\\
\notag &\quad\times\begin{cases} 0, & j-1=s=a=0, \\ \frac{(2s+j+a-1)(2s+2j+a+b-1)}{(2s+2j+a-2)(2s+2j+a-1)}(j+a-1), & \mbox{else}. \end{cases}
\end{align}
Moreover, one has
\begin{align}
\label{eq:gRs} g_R(m,m+s;s)&=\frac{\binom{m+s}{m}\binom{2m+a-1}{m}\binom{m+s+\frac{a-b-1}{2}}{m}}{\binom{2m}{m}\binom{2m+2s+a}{2m}\binom{m+\frac{a+b-1}{2}}{m}},\\
g_R(m,m+s;s+2m)&=\frac{\binom{2m+2s+a-1}{m+s}\binom{2m+a-1}{m}\binom{2m+s+\frac{a+b-1}{2}}{2m+s}}{\binom{4m+2s+a-1}{2m+s}\binom{m+s+\frac{a+b-1}{2}}{m+s}\binom{m+\frac{a+b-1}{2}}{m}}
\end{align}
and
\begin{align}
\label{eq:gRs1} &g_R(m,m+s;s+1)=\\
\notag &=\frac{4b m(m+s+a)(2s+a+2)}{(2m+2s+a+1)(2m+a-1)(2s+a-b+1)}g_R(m,m+s;s),\\
\label{eq:gR2ms1} &g_R(m,m+s;s+2m-1)=\\
\notag &=\frac{4b m(m+s)(4m+2s+a-2)}{(4m+2s+a+b-1)(2m+2s+a-1)(2m+a-1)}g_R(m,m+s;s+2m)
\end{align}
\cite[(2.2) to (2.4), (2.9)]{Ga70b}. The following auxiliary result deals with the canonical continuation of the coefficient function $\iota(m,m+s;.)$ to $[1,2m-1]$ and can be seen from \cite[Section 2]{Ga70a} and \cite[Section 2]{Ga70b}:

\begin{lemma}\label{lma:gRmu}
Let $(\alpha,\beta)\in V$ with $\alpha\neq\beta$, and let for $m\in\mathbb{N}$ and $s\in\mathbb{N}_0$ the function $\iota(m,m+s;.):[1,2m-1]\rightarrow\mathbb{R}$ be defined by \eqref{eq:gRmu}. Then $\iota(m,m+s;.)$ has at most one zero. Moreover, if $m\geq2$ or $a\geq0$, then $\iota(m,m+s;1)\geq0$.
\end{lemma}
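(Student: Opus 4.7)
Since $(\alpha,\beta)\in V$ and $\alpha\neq\beta$, the defining conditions of $V$ force $b=\alpha-\beta>0$; hence by \eqref{eq:gRmu} the sign of $\iota(m,m+s;j)$ equals that of the bracket. My first step is to recognize this bracket as a first-order forward difference: setting
$$T(j):=\frac{(2m-j)(2m+2s+j+2a)(2s+j+1)(j+1)}{2s+2j+a+1},$$
a direct substitution $j\mapsto j-1$ identifies the second summand of the bracket as $T(j-1)$, so that $\iota(m,m+s;j)=b[T(j)-T(j-1)]$. Since $a>-1$ keeps both denominators $2s+2j+a\pm1$ positive on $[1,2m-1]$, the sign of $\iota(m,m+s;j)$ agrees with that of the polynomial
\begin{align*}
Q(j)&:=(2m-j)(2m+2s+j+2a)(2s+j+1)(j+1)(2s+2j+a-1)\\
&\quad-(2m-j+1)(2m+2s+j+2a-1)(2s+j)\,j\,(2s+2j+a+1),
\end{align*}
whose $j^{5}$-coefficients cancel, leaving a polynomial in $j$ of degree $4$ with leading coefficient $-6$.

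For the first assertion, I would count the real roots of $Q$ in $[1,2m-1]$ via Descartes' rule of signs, following \cite{Ga70a,Ga70b}: after the shift $j\mapsto j+1$, which moves the interval to $[0,2m-2]$ so that Descartes counts positive roots, the coefficients of the shifted polynomial can be organized to admit at most one sign change, yielding at most one positive real root. For the second assertion, I would evaluate $\iota(m,m+s;1)$ directly from \eqref{eq:gRmu}, obtaining
$$\iota(m,m+s;1)=\frac{2b\bigl[2(2m-1)(2m+2s+2a+1)(s+1)(2s+a+1)-m(2m+2s+2a)(2s+1)(2s+a+3)\bigr]}{(2s+a+3)(2s+a+1)}.$$
The bracket, viewed as a quadratic polynomial in $m$, evaluates at $m=1$ to $2a(a+1)$ for every $s\geq0$ (this follows from the observation that both $a=0$ and $a=-1$ are roots of the bracket at $m=1$ combined with a leading-coefficient check), so for $m=1$ nonnegativity occurs precisely when $a\geq0$; for $m\geq2$, the correction $N(m)-N(1)$ factors as $(m-1)$ times an explicit positive quantity, and combining with the defining inequality of $V$ in \eqref{eq:Vrewritten} yields nonnegativity throughout the admissible range (the key fact being that the resulting bound is exactly tight on $V$'s boundary, e.g.\ at $m=2$, $s=0$).

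The main obstacle is the Descartes sign analysis for the quartic $Q$: while the leading coefficient and boundary evaluations such as $Q(0)$ and $Q(2m)$ are easily signed, controlling every coefficient of the shifted polynomial throughout the full parameter range $a>-1$, $s\geq0$, $m\in\mathbb{N}$ requires lengthy case-based bookkeeping. This is precisely the computational burden carried out in Gasper's original proof and the motivation for the author's later effort, as remarked following the lemma, to avoid Descartes' rule of signs altogether.
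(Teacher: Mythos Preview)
Your proposal correctly identifies the forward-difference structure $\iota(m,m+s;j)=b[T(j)-T(j-1)]$ (equivalently the paper's $b[f(j+1)-f(j)]$ with $f(j)=T(j-1)$) and then outlines the Descartes' rule argument from \cite{Ga70a,Ga70b}. This is a valid route, but it is precisely the one the paper replaces: the appendix proves the lemma via Proposition~\ref{prp:gRmu} using only the mean value theorem. The idea there is that two zeros of $\iota$ would force two zeros of $f'$ on $(1,2m)$; writing $f=uv$ with $u(j)=(2m+2s+j+2a-1)(2s+j)j$ and $v(j)=(2m-j+1)/(2s+2j+a-1)$, the equation $f'(j)=0$ becomes an equality between a strictly increasing and a strictly decreasing function of $j$ (for $s\geq1$), hence has at most one solution; the case $s=0$ is handled by a separate monotonicity argument. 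This bypasses the coefficient bookkeeping you flag as the main obstacle, at the cost of a short case split.

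For the second assertion your outline is on the right track but imprecise in one place: the relevant input from $V$ is not the defining inequality \eqref{eq:Vrewritten} itself (which involves $b$), but its consequence $a>-11/8+\sqrt{73}/8$, derived separately in the appendix. The paper makes this explicit by rewriting the bracket as
\[
((2s+3)(2s+a+1)-(4s+2))(m-2)(m+s+a+2)+(\text{nonneg.\ in }s)\,s+4a^2+11a+3,
\]
so that for $m\geq2$ everything reduces to $4a^2+11a+3\geq0$, which is exactly $a\geq-11/8+\sqrt{73}/8$. Your claim that the bound is ``tight on $V$'s boundary'' should thus be read as tightness at the single corner $(\alpha,\beta)=(-11/8+\sqrt{73}/8,-1)$, achieved at $m=2$, $s=0$; it is not tight along the curved part of $\partial V$.
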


The proofs given in \cite{Ga70a,Ga70b} rely on Descartes' rule of signs. In the appendix, we present an alternative proof of Lemma~\ref{lma:gRmu}. It will completely avoid Descartes' rule of signs and use the classical mean value theorem instead. The proof of Theorem~\ref{thm:gasperpositivevariant} below, which is the main result of this section, will essentially rely on Lemma~\ref{lma:gRmu}. Concerning the functions $\theta(m,m+s;.)$ and $\kappa(m,m+s;.)$, we will only need that 
\begin{equation}\label{eq:thetapos}
\theta(m,m+s;.)|_{\{1,\ldots,2m-2\}}>0\;(m\geq2)
\end{equation}
and
\begin{equation}\label{eq:kappapos}
\kappa(m,m+s;.)|_{\{2,\ldots,2m-1\}}>0\;(m\geq2),
\end{equation}
which is an obvious consequence of \eqref{eq:gRlambda} and \eqref{eq:gRnu} and was also used in \cite{Ga70b}.\\

We now give two characterizations. The first deals with positivity of the linearization coefficients $g_R(m,n;k)$ and can be regarded as a sharpening of Theorem~\ref{thm:gasper} because the nontrivial direction ``(ii) $\Rightarrow$ (i) (Theorem~\ref{thm:gasper})'' is implied by ``(ii) $\Rightarrow$ (i) (Theorem~\ref{thm:gasperpositivevariant})'' via continuity. Our proof is a---considerably less computational---modification of Gasper's approach \cite{Ga70b}. The second characterization follows from the first and deals with a certain oscillatory behavior of the $g_R(m,n;k)$. It will play an important role for the proof of Theorem~\ref{thm:mainodd} on generalized Chebyshev polynomials below.

\begin{theorem}\label{thm:gasperpositivevariant}
Let $\alpha,\beta>-1$. The following are equivalent:
\begin{enumerate}
\item[\rm(i)] All $g_R(m,n;k)$ are positive.
\item[\rm(ii)] $(\alpha,\beta)$ is located in the interior of $V$ (denoted by $V^{\circ}$ in the following), i.e.,
\begin{equation}\label{eq:Vinteriorfirst}
a^2+2b^2+3a>3\frac{(a+1)(a+2)}{(a+3)(a+5)}b^2
\end{equation}
and
\begin{equation}\label{eq:Vinteriorsecond}
b>0.
\end{equation}
\end{enumerate}
\end{theorem}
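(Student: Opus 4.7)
The plan is to treat the two implications separately, with the heavy lifting concentrated on (ii)\,$\Rightarrow$\,(i).

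For (i)\,$\Rightarrow$\,(ii), I would first invoke Theorem~\ref{thm:gasper}, which forces $(\alpha,\beta)\in V$, and then rule out the boundary. If $b=0$, formula \eqref{eq:gRs1} directly gives $g_R(1,1;1)=0$, contradicting (i). If $b>0$ but \eqref{eq:Vinteriorfirst} holds with equality, Gasper's computation cited in the excerpt shows $g_R(2,2;2)<0$ just outside $V$; since $g_R(m,n;k)$ is a rational, hence continuous, function of $(\alpha,\beta)$ on $(-1,\infty)^2$, this forces $g_R(2,2;2)=0$ on the boundary curve, again contradicting (i).

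For (ii)\,$\Rightarrow$\,(i), assume $b>0$ and that \eqref{eq:Vinteriorfirst} is strict. Writing $n=m+s$ and $k=s+j$ as in the recurrence set-up, the first step is to check positivity of the four ``boundary'' coefficients $g_R(m,m+s;s)$, $g_R(m,m+s;s+1)$, $g_R(m,m+s;s+2m-1)$, $g_R(m,m+s;s+2m)$ from \eqref{eq:gRs}--\eqref{eq:gR2ms1}. The shifted binomials in \eqref{eq:gRs} and its top counterpart are manifestly positive, since after the substitutions $(a-b-1)/2=\beta$ and $(a+b-1)/2=\alpha$ every factor of the corresponding falling product is bounded below by a positive constant (using $\alpha,\beta>-1$); and \eqref{eq:gRs1}, \eqref{eq:gR2ms1} reduce to $b$ times a manifestly positive rational function of $\alpha,\beta,m,s$. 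This already settles $m=1$, since $g_R(1,1+s;k)$ is only defined for $k\in\{s,s+1,s+2\}$.

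For $m\geq 2$ the core argument is a two-sided induction based on \eqref{eq:gRrecurrence}. Since $b>0$ implies $\alpha\neq\beta$, Lemma~\ref{lma:gRmu} applies: $\iota(m,m+s;\cdot)$ has at most one zero on $[1,2m-1]$ and $\iota(m,m+s;1)\geq 0$, so there exists $j_{0}\in[1,2m-1]\cup\{+\infty\}$ with $\iota(m,m+s;\cdot)\geq 0$ on $[1,j_{0}]$ and $\iota(m,m+s;\cdot)\leq 0$ on $[j_{0},2m-1]$. Using \eqref{eq:thetapos}, \eqref{eq:kappapos} and the positive starting values $g_R(m,m+s;s)$, $g_R(m,m+s;s+1)$, the recurrence propagates strict positivity \emph{upward in $j$} throughout the region where $\iota\geq 0$. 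Rewriting \eqref{eq:gRrecurrence} as
\[
\kappa(m,m+s;j)\,g_R(m,m+s;s+j-1)=\theta(m,m+s;j)\,g_R(m,m+s;s+j+1)-\iota(m,m+s;j)\,g_R(m,m+s;s+j)
\]
and starting from $g_R(m,m+s;s+2m-1)$, $g_R(m,m+s;s+2m)$, the same recurrence propagates strict positivity \emph{downward in $j$} throughout the region where $\iota\leq 0$. The two sweeps together cover every $j\in\{0,\ldots,2m\}$, irrespective of where $j_{0}$ sits. The main obstacle is precisely this possible sign change of $\iota$: a single-direction induction fails on the subinterval where $\iota$ has the wrong sign, and the sign structure supplied by Lemma~\ref{lma:gRmu} is exactly what allows the two halves to be glued. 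A minor technicality is the branch $s=a=0$ in \eqref{eq:gRnu}, where $\kappa(m,m+s;1)=0$ degenerates the first forward step; here a direct calculation from \eqref{eq:gRmu} gives $\iota(m,m;1)=\tfrac{4}{3}b(m^{2}-1)>0$ for $m\geq 2$, so the truncated recurrence $\theta\cdot g_R(m,m;2)=\iota\cdot g_R(m,m;1)$ still propagates strict positivity.
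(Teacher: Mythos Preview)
Your two-sided induction is the right architecture, but there is a genuine gap at the \emph{first} step of each sweep, precisely in the interesting region $a<0$ (i.e.\ $(\alpha,\beta)\in V^{\circ}\setminus\Delta$). The references \eqref{eq:thetapos} and \eqref{eq:kappapos} only give $\theta(m,m+s;j)>0$ for $j\in\{1,\ldots,2m-2\}$ and $\kappa(m,m+s;j)>0$ for $j\in\{2,\ldots,2m-1\}$. At the endpoints, both $\kappa(m,m+s;1)$ and $\theta(m,m+s;2m-1)$ contain the factor $a$; hence for $a<0$ one has $\kappa(m,m+s;1)<0$ and $\theta(m,m+s;2m-1)<0$. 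Consequently your forward step at $j=1$,
\[
\theta(1)\,g_R(m,m+s;s+2)=\iota(1)\,g_R(m,m+s;s+1)+\kappa(1)\,g_R(m,m+s;s),
\]
has a \emph{negative} last term and does not yield $g_R(m,m+s;s+2)>0$, and your backward step at $j=2m-1$ likewise fails to give $g_R(m,m+s;s+2m-2)>0$. The ``minor technicality'' you flag at $s=a=0$ is only the degenerate point of this obstruction; the real issue is $a<0$ throughout, which your argument never addresses.

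This is exactly the step the paper singles out as the crux: it establishes $g_R(m,m+s;s+2)>0$ and $g_R(m,m+s;s+2m-2)>0$ by the explicit decompositions \eqref{eq:simplifygasperI} and \eqref{eq:simplifygasperII}, whose right-hand sides are visibly bounded below by $(a^2+2b^2+3a)(a+3)(a+5)-3(a+1)(a+2)b^2>0$ when $(\alpha,\beta)\in V^{\circ}$. Only after these two coefficients are secured does the two-sided induction run on $\{2,\ldots,2m-2\}$, where \eqref{eq:thetapos} and \eqref{eq:kappapos} are available. Your sketch is thus correct for the easier subregion $a>0$ (cf.\ Remark~\ref{rem:simplagreaterzero}), but incomplete on $V^{\circ}\setminus\Delta$; to close it you need either the decompositions \eqref{eq:simplifygasperI}--\eqref{eq:simplifygasperII} or an equivalent direct verification of positivity for $g_R(m,m+s;s+2)$ and $g_R(m,m+s;s+2m-2)$.
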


\begin{corollary}\label{cor:oscillatory}
Let $\alpha,\beta>-1$, and let $\widetilde{g_R}(m,n;k)$ denote the linearization coefficients belonging to the sequence $(R_n^{(\beta,\alpha)}(x))_{n\in\mathbb{N}_0}$. Then the following hold:
\begin{enumerate}
\item[\rm(i)] All numbers $(-1)^{m+n+k}\widetilde{g_R}(m,n;k)$ are nonnegative if and only if $(\alpha,\beta)\in V$.
\item[\rm(ii)] All numbers $(-1)^{m+n+k}\widetilde{g_R}(m,n;k)$ are positive if and only if $(\alpha,\beta)\in V^{\circ}$.
\end{enumerate}
\end{corollary}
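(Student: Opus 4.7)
The plan is to reduce both equivalences directly to Theorem~\ref{thm:gasper} and Theorem~\ref{thm:gasperpositivevariant} by exploiting the symmetry identity \eqref{eq:alphabetachange}, which expresses $R_n^{(\beta,\alpha)}(x)$ as $(-1)^n$ times a strictly positive Pochhammer factor times $R_n^{(\alpha,\beta)}(-x)$. The target is to write each $\widetilde{g_R}(m,n;k)$ as an explicit positive multiple of $(-1)^{m+n-k} g_R(m,n;k)$; once that identity is in hand, the corollary follows by reading off signs.

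To derive it, I would substitute \eqref{eq:alphabetachange} into
\[R_m^{(\beta,\alpha)}(x)\,R_n^{(\beta,\alpha)}(x) = \sum_{k=|m-n|}^{m+n} \widetilde{g_R}(m,n;k)\, R_k^{(\beta,\alpha)}(x),\]
expressing both sides in the linearly independent family $\{R_\ell^{(\alpha,\beta)}(-x)\}_\ell$, and then expand the left-hand side via the standard Jacobi linearization applied to $R_m^{(\alpha,\beta)}(-x)\,R_n^{(\alpha,\beta)}(-x)$. Matching coefficients of $R_k^{(\alpha,\beta)}(-x)$ yields
\[\widetilde{g_R}(m,n;k) = (-1)^{m+n-k}\,\frac{(\alpha+1)_m\,(\alpha+1)_n\,(\beta+1)_k}{(\beta+1)_m\,(\beta+1)_n\,(\alpha+1)_k}\, g_R(m,n;k).\]

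Since $\alpha,\beta>-1$, all Pochhammer symbols appearing above are strictly positive, and since $(-1)^{2(m+n)}=1$, multiplying by $(-1)^{m+n+k}$ gives
\[(-1)^{m+n+k}\widetilde{g_R}(m,n;k) = \frac{(\alpha+1)_m\,(\alpha+1)_n\,(\beta+1)_k}{(\beta+1)_m\,(\beta+1)_n\,(\alpha+1)_k}\, g_R(m,n;k).\]
Thus the sign (respectively, the strict positivity) of $(-1)^{m+n+k}\widetilde{g_R}(m,n;k)$ coincides with that of $g_R(m,n;k)$ for every admissible triple. Part (i) then reads: all $g_R(m,n;k)$ are nonnegative, which by Theorem~\ref{thm:gasper} is equivalent to $(\alpha,\beta)\in V$; and part (ii) reads: all $g_R(m,n;k)$ are positive, which by Theorem~\ref{thm:gasperpositivevariant} is equivalent to $(\alpha,\beta)\in V^{\circ}$. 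There is no real obstacle here; the only care needed is in tracking the Pochhammer factors and the sign $(-1)^{m+n-k}$ under the substitution $x\mapsto -x$.
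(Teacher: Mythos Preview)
Your proof is correct and follows essentially the same route as the paper: you use the symmetry identity \eqref{eq:alphabetachange} to derive the relation $(-1)^{m+n+k}\widetilde{g_R}(m,n;k)=\frac{(\alpha+1)_m(\alpha+1)_n(\beta+1)_k}{(\beta+1)_m(\beta+1)_n(\alpha+1)_k}\,g_R(m,n;k)$ and then invoke Theorem~\ref{thm:gasper} and Theorem~\ref{thm:gasperpositivevariant}. The paper's proof is just a terser version of exactly this argument.
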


\begin{proof}[Proof (Theorem~\ref{thm:gasperpositivevariant})]
We preliminarily note that $V^{\circ}$ can indeed be characterized as the set of all $(\alpha,\beta)\in(-1,\infty)^2$ satisfying the strict inequalities \eqref{eq:Vinteriorfirst} and \eqref{eq:Vinteriorsecond} because the gradient of the function $\varphi:(-1,\infty)^2\rightarrow\mathbb{R}$,
\begin{equation*}
\varphi(\alpha,\beta):=(a^2+2b^2+3a)(a+3)(a+5)-3(a+1)(a+2)b^2
\end{equation*}
does not vanish on $V$ (and obviously the same is the case for the function $(-1,\infty)^2\rightarrow\mathbb{R}$, $(\alpha,\beta)\mapsto b$); use that every $(\alpha,\beta)\in V$ satisfies
\begin{align*}
\frac{\partial\varphi}{\partial\alpha}(\alpha,\beta)&\geq\frac{\partial\varphi}{\partial\alpha}(\alpha,\beta)-(a^2+2b^2+3a)(4\beta+20)=\\
&=(24\beta^2+100\beta+116)\alpha-8\beta^3-40\beta^2-20\beta+80\geq\\
&\geq(24\beta^2+100\beta+116)\beta-8\beta^3-40\beta^2-20\beta+80=\\
&=(4\beta+8)(4\beta^2+7\beta+10)>\\
&>0.
\end{align*}

We establish the easy direction ``(i) $\Rightarrow$ (ii)'' in a similar way as in \cite{Ga70b}: if $b\leq0$, then \eqref{eq:recJacobi} and \eqref{eq:reccoeffJacobi} (or also \eqref{eq:gRs} and \eqref{eq:gRs1}) show that
\begin{equation*}
g_R(1,1;1)=b_1^R=\frac{4b}{(a+3)(a+b+1)}\leq0;
\end{equation*}
if $b>0$ but $(\alpha,\beta)$ is not located in the interior of $V$, then the equations \eqref{eq:gRrecurrence} to \eqref{eq:gRs} and \eqref{eq:gRs1} yield
\begin{equation*}
g_R(2,2;2)=\frac{4[(a^2+2b^2+3a)(a+3)(a+5)-3(a+1)(a+2)b^2]}{(a+3)(a+5)(a+6)(a+b+1)(a+b+3)}\leq0.
\end{equation*}
We now come to the interesting direction ``(ii) $\Rightarrow$ (i)'', so let $(\alpha,\beta)\in V^{\circ}$, let $m\in\mathbb{N}$ and let $s\in\mathbb{N}_0$. We have to show that $g_R(m,m+s;s+j)>0$ for all $j\in\{0,\ldots,2m\}$. Starting similarly to \cite{Ga70b}, we use ``two-sided induction'' and proceed as follows: \eqref{eq:gRs} to \eqref{eq:gR2ms1} yield $g_R(m,m+s;s)>0$, $g_R(m,m+s;s+1)>0$, $g_R(m,m+s;s+2m)>0$ and $g_R(m,m+s;s+2m-1)>0$.\footnote{Of course, the positivity of $g_R(m,m+s;s)$ and $g_R(m,m+s;s+2m)$ is also clear from general results, cf. Section~\ref{sec:intro}.} If $m=1$, we are already done (this case is also clear from \eqref{eq:recJacobi} and \eqref{eq:reccoeffJacobi}). Hence, assume that $m\geq2$ from now on; it is then left to show that $g_R(m,m+s;s+j)>0$ for all $j\in\{2,\ldots,2m-2\}$.\\

\eqref{eq:thetapos} and \eqref{eq:kappapos} yield $\theta(m,m+s;1)>0$ and $\kappa(m,m+s;2m-1)>0$, and via the equations \eqref{eq:gRrecurrence} to \eqref{eq:gRnu} and \eqref{eq:gRs1}, \eqref{eq:gR2ms1} we compute
\begin{equation}\label{eq:simplifygasperI}
\begin{split}
&\frac{(2m+a-1)(2s+a-b+1)(2m+2s+a+1)(2s+a+3)}{4m(m+s+a)(2s+a+1)g_R(m,m+s;s)}\\
&\quad\times\theta(m,m+s;1)g_R(m,m+s;s+2)=\\
&=\frac{(2m+a-1)(2s+a-b+1)(2m+2s+a+1)(2s+a+3)}{4m(m+s+a)(2s+a+1)g_R(m,m+s;s)}\\
&\quad\times[\iota(m,m+s;1)g_R(m,m+s;s+1)+\kappa(m,m+s;1)g_R(m,m+s;s)]=\\
&=(b^2+a)(2m-4)(2m+2s+2a+4)2s\\
&\quad+(a^2+2b^2+3a)[(2m-4)(2m+2s+2a+4)+2s(2s+2a+8)+(a+3)(a+5)]\\
&\quad-3(a+1)(a+2)b^2
\end{split}
\end{equation}
and
\begin{equation}\label{eq:simplifygasperII}
\begin{split}
&\frac{(2m+a-1)(2m+2s+a-1)(4m+2s+a-3)(4m+2s+a+b-1)}{4m(m+s)(4m+2s+a-1)g_R(m,m+s;s+2m)}\\
&\quad\times\kappa(m,m+s;2m-1)g_R(m,m+s;s+2m-2)=\\
&=\frac{(2m+a-1)(2m+2s+a-1)(4m+2s+a-3)(4m+2s+a+b-1)}{4m(m+s)(4m+2s+a-1)g_R(m,m+s;s+2m)}\\
&\quad\times[\theta(m,m+s;2m-1)g_R(m,m+s;s+2m)\\
&\quad\quad-\iota(m,m+s;2m-1)g_R(m,m+s;s+2m-1)]=\\
&=(b^2+a)(2m-4)(2m+2s-4)(4m+2s+2a)\\
&\quad+(a^2+2b^2+3a)[(2m-4)(6m+6s+4a+4)+2s(2s+2a+8)+(a+3)(a+5)]\\
&\quad-3(a+1)(a+2)b^2.
\end{split}
\end{equation}
Observe that $b^2+a>0$: if $a\leq0$, this is a consequence of the decomposition
\begin{equation*}
2b^2+2a=a^2+2b^2+3a-a(a+1).
\end{equation*}
Therefore, the right hand sides of \eqref{eq:simplifygasperI} and \eqref{eq:simplifygasperII} are greater than or equal to $(a^2+2b^2+3a)(a+3)(a+5)-3(a+1)(a+2)b^2$, so these equations imply that $g_R(m,m+s;s+2)>0$ and $g_R(m,m+s;s+2m-2)>0$. We note at this stage that \eqref{eq:simplifygasperI} and \eqref{eq:simplifygasperII} allow us to obtain the positivity of $g_R(m,m+s;s+2)$ and $g_R(m,m+s;s+2m-2)$ in a much faster way than Gasper estimated in \cite{Ga70b} (establishing the nonnegativity of $g_R(m,m+s;s+2)$ and $g_R(m,m+s;s+2m-2)$ under the assumption $(\alpha,\beta)\in V$). Besides the avoidance of Descartes' rule of signs in an alternative proof of Lemma~\ref{lma:gRmu} given in the appendix (the lemma will be used in the induction step below), this is our essential simplification of Gasper's approach. A more detailed comparison will be given subsequent to the proof.\\

As we are done if $m=2$, we assume that $m\geq3$ from now on. The remaining proof works similarly as in \cite{Ga70b} again. We first apply Lemma~\ref{lma:gRmu} and obtain the existence of an $N\in\{1,\ldots,2m-1\}$ such that $\iota(m,m+s;j)\geq0$ for $1\leq j\leq N$ and $\iota(m,m+s;j)<0$ for those $1\leq j\leq2m-1$ which satisfy $j\geq N+1$. We then make use of \eqref{eq:thetapos} and \eqref{eq:kappapos} and distinguish two cases:\\

\textit{Case 1:} $N\geq3$. Then \eqref{eq:gRrecurrence} and induction yield
\begin{align*}
&g_R(m,m+s;s+j+1)=\\
&=\frac{\iota(m,m+s;j)}{\theta(m,m+s;j)}g_R(m,m+s;s+j)+\frac{\kappa(m,m+s;j)}{\theta(m,m+s;j)}g_R(m,m+s;s+j-1)>\\
&>0\;(2\leq j\leq N-1).
\end{align*}
This shows the positivity of $g_R(m,m+s;s+3),\ldots,g_R(m,m+s;s+N)$.\\

\textit{Case 2:} $N\leq2m-3$. In this case, \eqref{eq:gRrecurrence} and induction yield
\begin{align*}
&g_R(m,m+s;s+j-1)=\\
&=-\frac{\iota(m,m+s;j)}{\kappa(m,m+s;j)}g_R(m,m+s;s+j)+\frac{\theta(m,m+s;j)}{\kappa(m,m+s;j)}g_R(m,m+s;s+j+1)>\\
&>0\;(N+1\leq j\leq2m-2),
\end{align*}
which establishes the positivity of $g_R(m,m+s;s+N),\ldots,g_R(m,m+s;s+2m-3)$.\\

If $N\leq2$, then $N<2m-3$ and the positivity of
\begin{equation*}
g_R(m,m+s;s+3),\ldots,g_R(m,m+s;s+2m-3)
\end{equation*}
is a consequence of Case 2. If $N\geq2m-2$, then $N>3$ and the positivity of $g_R(m,m+s;s+3),\ldots,g_R(m,m+s;s+2m-3)$ is a consequence of Case 1. Finally, if $3\leq N\leq2m-3$, then the combination of both cases yields the positivity of $g_R(m,m+s;s+3),\ldots,g_R(m,m+s;s+2m-3)$.
\end{proof}

Our argument via the central equations \eqref{eq:simplifygasperI} and \eqref{eq:simplifygasperII} in the initial step above shows a typical aspect of the strategy (this aspect will also be important in Section~\ref{sec:gencheb}): as soon as one knows decompositions like in \eqref{eq:simplifygasperI}, \eqref{eq:simplifygasperII} which allow one to see the signs of the relevant parts, these decompositions can easily (yet more or less tediously) be verified by comparing the expansions, or by comparing common zeros and leading coefficients and so on. Hence, the actual task is \textit{finding} such decompositions. In this context, we want to make some additional notes concerning a comparison between Gasper's strategy \cite{Ga70b} and ours: instead of our decomposition \eqref{eq:simplifygasperI}, Gasper decomposed the left-hand side by finding
\begin{equation}\label{eq:simplifygasperIgaspervariant}
\begin{split}
&\frac{(2m+a-1)(2s+a-b+1)(2m+2s+a+1)(2s+a+3)}{4m(m+s+a)(2s+a+1)g_R(m,m+s;s)}\\
&\quad\times\theta(m,m+s;1)g_R(m,m+s;s+2)=\\
&=\frac{a A(a)+b^2B(a)}{(2s+a+1)^2}
\end{split}
\end{equation}
with
\begin{equation*}
A(a):=(2m+2s+a+1)(2m+a-1)(2s+a+3)(2s+a+1)^2\;(>0)
\end{equation*}
and
\begin{align*}
B(a):&=-a(2m+2s+a+1)(2m+a-1)(2s+a+3)\\
&\quad+4(2s+a+2)[(s+1)(2m-1)(2m+2s+2a+1)(2s+a+1)\\
&\quad\quad\quad\quad\quad\quad\quad\quad-m(m+s+a)(2s+1)(2s+a+3)].
\end{align*}
Using the original formulas contained in \eqref{eq:Voriginal}, Gasper then estimated
\begin{align*}
&\frac{(2m+a-1)(2s+a-b+1)(2m+2s+a+1)(2s+a+3)}{4m(m+s+a)(2s+a+1)g_R(m,m+s;s)}\\
&\quad\times\theta(m,m+s;1)g_R(m,m+s;s+2)\geq\\
&\geq\frac{b^2D(a)}{(a+5)(a+3)^2(2s+a+1)^2}
\end{align*}
for all $(\alpha,\beta)\in V$, where
\begin{equation*}
D(a):=(a^2-7a-24)A(a)+(a+5)(a+3)^2B(a)=4\sum_{k=0}^6d_k a^k
\end{equation*}
for some $d_0,\ldots,d_6\in\mathbb{R}$ which depend on $m,s$ but do not depend on $a,b$. Finally, Gasper explicitly computed the cumbersome coefficients $d_0,\ldots,d_6$, used that $a>-1/3$ for all $(\alpha,\beta)\in V$, restricted to the special case $a<0$ (cf. Remark~\ref{rem:simplagreaterzero} below) and estimated $D(a)\geq0\;(a\in(-1/3,0))$ by showing that $D(-1/3)\geq0$, $D^{\prime}(-1/3)\geq0$ and $D^{\prime\prime}(a)\geq0\;(a\in[-1/3,0])$ to conclude that $g_R(m,m+s;s+2)\geq0$.\footnote{The prime means the derivative w.r.t. $a$.} In contrast to this rather painful decomposition \eqref{eq:simplifygasperIgaspervariant} and the original formula for $V$ \eqref{eq:Voriginal}, our decomposition \eqref{eq:simplifygasperI} and the rewritten form of $V$ \eqref{eq:Vrewritten} allow us to directly see that $g_R(m,m+s;s+2)\geq0$ for all $(\alpha,\beta)\in V$ (and ,,$>0$'' for $(\alpha,\beta)\in V^{\circ}$). Similar differences occur concerning the linearization coefficient $g_R(m,m+s;s+2m-2)$.

\begin{remark}\label{rem:simplagreaterzero}
As was similarly observed in \cite{Ga70a,Ga70b}, the proof of the direction ``(ii) $\Rightarrow$ (i)'' considerably simplifies in the special case $a>0$, i.e., for $(\alpha,\beta)\in\Delta^{\circ}\subsetneq V^{\circ}$. On the one hand, for $a>0$ the functions $\theta(m,m+s;.)$ and $\kappa(m,m+s;.)$ are positive on their full domains (cf. \eqref{eq:gRlambda} and \eqref{eq:gRnu}). Hence, one can avoid the computations (and appropriate decompositions) of $g_R(m,m+s;s+2)$ and $g_R(m,m+s;s+2m-2)$. On the other hand, the proof of the important ingredient Lemma~\ref{lma:gRmu} is simpler for $a>0$, cf. also our alternative proof of this lemma given in the appendix. If $(\alpha,\beta)$ is located in the interior of $\Delta$, then the positivity of all $g_R(m,n;k)$ can also be seen via Rahman's formulas \eqref{eq:Rahmaneven} and \eqref{eq:Rahmanodd}. The simplest subcase is given by $\alpha=\beta+1$, for which the positivity of the $g_R(m,n;k)$ can be seen via a very simple explicit formula \cite{Hy62}.
\end{remark}

\begin{proof}[Proof (Corollary~\ref{cor:oscillatory})]
As a consequence of \eqref{eq:alphabetachange}, the linearization coefficients are connected to each other via
\begin{equation*}
(-1)^{m+n+k}\widetilde{g_R}(m,n;k)=\frac{(\alpha+1)_m(\alpha+1)_n}{(\alpha+1)_k}\frac{(\beta+1)_k}{(\beta+1)_m(\beta+1)_n}g_R(m,n;k).
\end{equation*}
Hence, the assertions are immediate consequences of Theorem~\ref{thm:gasper} and Theorem~\ref{thm:gasperpositivevariant}, cf. also the remarks at the end of \cite[Section 1]{Ga70b}.
\end{proof}

\section{Linearization of the product of generalized Chebyshev polynomials: solution to a problem of Szwarc}\label{sec:gencheb}

Let $\alpha,\beta>-1$ again, and let $a,b$ be defined as in Section~\ref{sec:intro}. In the following, we use the notation and auxiliary functions of Section~\ref{sec:jacobi}. The sequence $(T_n^{(\alpha,\beta)}(x))_{n\in\mathbb{N}_0}$ of generalized Chebyshev polynomials\footnote{Some authors prefer to call these polynomials `generalized ultraspherical polynomials' or `generalized Gegenbauer polynomials', and some authors use the expression `generalized Chebyshev polynomials' for different things.} is also (uniquely) determined by the orthogonalization measure
\begin{equation*}
\mathrm{d}\mu_T(x)=\frac{\Gamma(\alpha+\beta+2)}{\Gamma(\alpha+1)\Gamma(\beta+1)}(1-x^2)^{\alpha}|x|^{2\beta+1}\chi_{(-1,1)}(x)\,\mathrm{d}x
\end{equation*}
and the normalization $T_n^{(\alpha,\beta)}(1)\equiv1$ \cite[Chapter V 2 (G)]{Ch78} \cite[(4.0.2)]{Is09}. Moreover, $(T_n^{(\alpha,\beta)}(x))_{n\in\mathbb{N}_0}$ satisfies the recurrence relation $T_0^{(\alpha,\beta)}(x)=1$, $T_1^{(\alpha,\beta)}(x)=x$,
\begin{equation}\label{eq:recgencheb}
x T_n^{(\alpha,\beta)}(x)=a_n^T T_{n+1}^{(\alpha,\beta)}(x)+c_n^T T_{n-1}^{(\alpha,\beta)}(x)\;(n\in\mathbb{N})
\end{equation}
with $(a_n^T)_{n\in\mathbb{N}},(c_n^T)_{n\in\mathbb{N}}\subseteq(0,1)$ given by
\begin{equation}\label{eq:reccoeffgencheb}
\begin{split}
a_{2n-1}^T&\equiv\frac{n+\alpha}{2n+\alpha+\beta}=\frac{2n+a+b-1}{4n+2a-2},\;a_{2n}^T\equiv\frac{n+\alpha+\beta+1}{2n+\alpha+\beta+1}=\frac{n+a}{2n+a},\\
c_{2n-1}^T&\equiv\frac{n+\beta}{2n+\alpha+\beta}=\frac{2n+a-b-1}{4n+2a-2},\;c_{2n}^T\equiv\frac{n}{2n+\alpha+\beta+1}=\frac{n}{2n+a}
\end{split}
\end{equation}
\cite[3 (f)]{La83}. Using \eqref{eq:genchebeven}, \eqref{eq:genchebodd}, \eqref{eq:recgencheb} and \eqref{eq:reccoeffgencheb}, one can relate the $g_T(m,n;k)$ to the $g_R(m,n;k)$ and $g_R^+(m,n;k)$. For instance, this was done in \cite{La83}: one has
\begin{equation}\label{eq:gTeven}
g_T(2m,2n;2k)=g_R(m,n;k)
\end{equation}
and
\begin{equation}\label{eq:gTodd}
g_T(2m+1,2n+1;2k)=\begin{cases} c_{2|m-n|+1}^T g_R^+(m,n;|m-n|), & k=|m-n|, \\ a_{2m+2n+1}^T g_R^+(m,n;m+n), & k=m+n+1, \\ a_{2k-1}^T g_R^+(m,n;k-1)+c_{2k+1}^T g_R^+(m,n;k), & \mbox{else}; \end{cases}
\end{equation}
moreover, $g_T(m,n;k)=0$ if $m+n-k$ is odd (trivial consequence of symmetry), and $g_T(2m+1,2n;2k+1)$ and $g_T(2m,2n+1;2k+1)$ relate to \eqref{eq:gTodd} via
\begin{equation}\label{eq:gTmixed}
g_T(2m+1,2n;2k+1)=\frac{h_T(2k+1)}{h_T(2n)}g_T(2m+1,2k+1;2n),
\end{equation}
which is a consequence of \eqref{eq:lincoeffh}, and
\begin{equation}\label{eq:gTadd}
g_T(2m,2n+1;2k+1)=g_T(2n+1,2m;2k+1).
\end{equation}
For the sake of completeness, we recall the short proof of \eqref{eq:gTodd}: just expand
\begin{align*}
&T_{2m+1}^{(\alpha,\beta)}(x)T_{2n+1}^{(\alpha,\beta)}(x)=\\
&=x^2R_m^{(\alpha,\beta+1)}(2x^2-1)R_n^{(\alpha,\beta+1)}(2x^2-1)=\\
&=x^2\sum_{k=|m-n|}^{m+n}g_R^+(m,n;k)R_k^{(\alpha,\beta+1)}(2x^2-1)=\\
&=x\sum_{k=|m-n|}^{m+n}g_R^+(m,n;k)T_{2k+1}^{(\alpha,\beta)}(x)=\\
&=\sum_{k=|m-n|}^{m+n}g_R^+(m,n;k)[a_{2k+1}^T T_{2k+2}^{(\alpha,\beta)}(x)+c_{2k+1}^T T_{2k}^{(\alpha,\beta)}(x)]=\\
&=\sum_{k=|m-n|+1}^{m+n+1}a_{2k-1}^T g_R^+(m,n;k-1)T_{2k}^{(\alpha,\beta)}(x)+\sum_{k=|m-n|}^{m+n}c_{2k+1}^T g_R^+(m,n;k)T_{2k}^{(\alpha,\beta)}(x).
\end{align*}
The proof of \eqref{eq:gTeven} is even shorter.\\

We deal with the following problems:
\begin{enumerate}
\item[\rm(A)] Szwarc's problem, cf. Section~\ref{sec:intro}: find all pairs $(\alpha,\beta)\in(-1,\infty)^2$ such that $(T_n^{(\alpha,\beta)}(x))_{n\in\mathbb{N}_0}$ satisfies nonnegative linearization of products, i.e., such that all $g_T(m,n;k)$ are nonnegative.
\item[\rm(B)] Find all pairs $(\alpha,\beta)\in(-1,\infty)^2$ such that all $g_T(m,n;k)$ with at least one odd entry $m,n$ are nonnegative.
\end{enumerate}
The pairs $(\alpha,\beta)\in(-1,\infty)^2$ such that all $g_T(m,n;k)$ with two even entries $m,n$ are nonnegative are exactly the $(\alpha,\beta)\in V$, which is an obvious consequence of \eqref{eq:gTeven} and Theorem~\ref{thm:gasper}. Hence, it will be interesting to compare the resulting set of (B) to $V$.\\

The solutions to (A) and (B) will be given in Theorem~\ref{thm:mainfull} and Theorem~\ref{thm:mainodd}, respectively. We want to motivate these results by establishing two necessary conditions for the pairs $(\alpha,\beta)$ which are as in (B): given any $\alpha,\beta>-1$ and arbitrary $m,n\in\mathbb{N}$ with $n\geq m$, we use the notation of the previous sections and compute
\begin{equation}\label{eq:necfirst}
\begin{split}
&(2m+a)(2m+2s+a+2)\frac{2s+a+b+1}{2s+a+2}\left(\frac{c_{2s+3}^T}{a_{2s+1}^T}\frac{g_R^+(m,m+s;s+1)}{g_R^+(m,m+s;s)}+1\right)=\\
&=4bm^2+4b(s+a+1)m+a(2s+a+b+1)
\end{split}
\end{equation}
via \eqref{eq:gRs1} and \eqref{eq:reccoeffgencheb}. Making also use of \eqref{eq:gRrecurrence}, which yields
\begin{equation*}
\frac{g_R^+(m,m+s;s+2)}{\underbrace{g_R^+(m,m+s;s+1)}_{\neq0}}=\frac{\iota^+(m,m+s;1)}{\underbrace{\theta^+(m,m+s;1)}_{>0}}+\frac{\kappa^+(m,m+s;1)}{\theta^+(m,m+s;1)}\frac{g_R^+(m,m+s;s)}{g_R^+(m,m+s;s+1)}
\end{equation*}
for $b\neq1$, and combining this with \eqref{eq:gRlambda} to \eqref{eq:gRnu}, \eqref{eq:gRs1} and \eqref{eq:reccoeffgencheb}, we furthermore obtain
\begin{equation}\label{eq:necsecond}
\begin{split}
&4(b-1)(2m+a-1)(2m+2s+a+3)\frac{(s+1)(2s+a+b+3)}{2s+a+4}\\
&\quad\times\left(\frac{c_{2s+5}^T}{a_{2s+3}^T}\frac{g_R^+(m,m+s;s+2)}{g_R^+(m,m+s;s+1)}+1\right)=\\
&=(4m-4)(m+s+a+2)\left[(a^2+2b^2+3a)(s+1)-a(a+1)s\right]\\
&\quad+(a+1)(2s+a+b+3)\left[(a+2b)(2s+2-b)+a^2+2b^2+3a\right]\;(b\neq1).
\end{split}
\end{equation}
If $b<0$, then the right-hand side of \eqref{eq:necfirst} becomes negative for (all) sufficiently large $m\in\mathbb{N}$, whereas
\begin{equation*}
(2m+a)(2m+2s+a+2)\frac{2s+a+b+1}{2s+a+2}
\end{equation*}
is always positive. Hence, if $b<0$, then
\begin{equation*}
\frac{c_{2s+3}^T}{a_{2s+1}^T}\frac{g_R^+(m,m+s;s+1)}{g_R^+(m,m+s;s)}+1
\end{equation*}
is negative for sufficiently large $m\in\mathbb{N}$. Since $g_R^+(m,m+s;s)$ is always positive, the latter yields the negativity of
\begin{equation*}
g_T(2m+1,2m+2s+1;2s+2)=a_{2s+1}^T g_R^+(m,m+s;s)+c_{2s+3}^T g_R^+(m,m+s;s+1)
\end{equation*}
\eqref{eq:gTodd} for sufficiently large $m\in\mathbb{N}$.\\

Now assume that $a^2+2b^2+3a<0$. On the one hand, one necessarily has $b<1$ then (because $a^2+2+3a=(a+1)(a+2)>0$), so
\begin{equation*}
4(b-1)(2m+a-1)(2m+2s+a+3)\frac{(s+1)(2s+a+b+3)}{2s+a+4}
\end{equation*}
is always negative. On the other hand, if $s=0$, then the right-hand side of \eqref{eq:necsecond} becomes negative for (all) sufficiently large $m\in\mathbb{N}$. Hence,
\begin{equation*}
\frac{c_5^T}{a_3^T}\frac{g_R^+(m,m;2)}{g_R^+(m,m;1)}+1
\end{equation*}
is positive for sufficiently large $m\in\mathbb{N}$. Since, due to \eqref{eq:gRs1}, $g_R^+(m,m;1)$ is negative, we obtain the negativity of
\begin{equation*}
g_T(2m+1,2m+1;4)=a_3^T g_R^+(m,m;1)+c_5^T g_R^+(m,m;2)
\end{equation*}
\eqref{eq:gTodd} for sufficiently large $m\in\mathbb{N}$.\\

Putting all together, we see that every pair $(\alpha,\beta)$ which fits in (B) has to satisfy both $b\geq0$ and $a^2+2b^2+3a\geq0$. Our following result deals with the converse and shows that these two conditions already \textit{characterize} (B).

\begin{theorem}\label{thm:mainodd}
Let $\alpha,\beta>-1$. The following are equivalent:
\begin{enumerate}
\item[\rm(i)] For all $m,n\in\mathbb{N}_0$ such that at least one of these numbers is odd, all linearization coefficients $g_T(m,n;k)$ are nonnegative.
\item[\rm(ii)] $(\alpha,\beta)\in V^{\prime}$, where
\begin{equation*}
V^{\prime}:=\left\{(\alpha,\beta)\in(-1,\infty)^2:a^2+2b^2+3a\geq0,b\geq0\right\}\supsetneq V.
\end{equation*}
\end{enumerate}
If $(\alpha,\beta)\in V^{\prime}\backslash\Delta$ and $m,n\in\mathbb{N}_0$ are such that at least one of these numbers is odd, and if $k\in\{|m-n|,\ldots,m+n\}$ is such that $m+n-k$ is even, then $g_T(m,n;k)$ is positive.
\end{theorem}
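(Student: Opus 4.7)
The direction (i)$\Rightarrow$(ii) is already established by the two calculations immediately preceding the theorem statement. For (ii)$\Rightarrow$(i) together with the positivity part, my plan is as follows. First I use \eqref{eq:gTmixed} (with $h_T>0$) and \eqref{eq:gTadd} to reduce the mixed-parity case to the odd-odd one. For the odd-odd case, \eqref{eq:gTodd} handles the boundary values $k=|m-n|$ and $k=m+n+1$ immediately: $c_{2|m-n|+1}^T$ and $a_{2m+2n+1}^T$ are positive since $\alpha,\beta>-1$, and $g_R^+(m,n;|m-n|)$, $g_R^+(m,n;m+n)$ are positive by the general theory recalled in Section~\ref{sec:intro}. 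The core task is then to show that
\[
\psi(m,n;k) := a_{2k-1}^T g_R^+(m,n;k-1) + c_{2k+1}^T g_R^+(m,n;k)
\]
is nonnegative (and positive on $V'\setminus\Delta$) for $|m-n|<k<m+n+1$.

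I plan to run a two-sided induction on $k$ mirroring the proof of Theorem~\ref{thm:gasperpositivevariant}, but applied to the combination $\psi$ and driven by the shifted Hylleraas recursion, i.e., \eqref{eq:gRrecurrence} with coefficients $\theta^+$, $\iota^+$, $\kappa^+$ obtained from $\theta$, $\iota$, $\kappa$ via the substitution $(a,b)\to(a+1,b-1)$. The required sign analysis of $\iota^+(m,m+s;\cdot)$ is a shifted analogue of Lemma~\ref{lma:gRmu}, provable by the same mean-value-theorem argument once the parameter shift is tracked; the positivity of $\theta^+$ and $\kappa^+$ on the relevant ranges follows from the defining inequalities of $V'$ (which imply $a+b\ge 0$, with strict inequality on $V'\setminus\Delta$ since $b\ge\sqrt{-a(a+3)/2}>-a$ there). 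With $n=m+s$ and $j=k-s\in\{1,\dots,2m\}$, the base cases are $\psi(m,m+s;s+1)$ and $\psi(m,m+s;s+2)$ near the lower boundary, together with their upper-boundary counterparts $\psi(m,m+s;s+2m)$ and $\psi(m,m+s;s+2m-1)$.

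Identity \eqref{eq:necfirst} already writes $\psi(m,m+s;s+1)$ as the product of a manifestly positive factor with the quadratic $4bm^2+4b(s+a+1)m+a(2s+a+b+1)$; its minimum over $m\ge 1$, $s\ge 0$ occurs at $m=1$, $s=0$ (since $b\ge 0$ makes the quadratic in $m$ convex with negative vertex, and $2b+a>0$ on $V'$ makes the linear coefficient in $s$ positive), so the bound reduces to the single polynomial inequality $a(a+1)+b(5a+8)\ge 0$, which holds on $V'$ and holds strictly on $V'\setminus\Delta$ via $b\ge\sqrt{-a(a+3)/2}>0$ by a direct computation. Identity \eqref{eq:necsecond} treats $\psi(m,m+s;s+2)$ in the same way, and the upper-boundary analogues are obtained in the same spirit via the shifted recursion at the upper end together with \eqref{eq:gR2ms1} with $(a,b)\to(a+1,b-1)$. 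The two-sided induction then propagates nonnegativity (and strict positivity) to all intermediate $j$ exactly as in Theorem~\ref{thm:gasperpositivevariant}. The main obstacle is finding the right polynomial decompositions in the upper-boundary base cases—analogues of \eqref{eq:simplifygasperI}, \eqref{eq:simplifygasperII}, \eqref{eq:necfirst}, and \eqref{eq:necsecond}—that render the signs transparent under $V'$'s weaker defining inequalities; as the authors indicate, computer-algebra assistance (Maple) is essential for identifying the correct nested sums of factorizations, after which verification is routine.
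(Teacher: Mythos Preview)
Your reduction to the odd--odd case via \eqref{eq:gTmixed}, \eqref{eq:gTadd} and your treatment of the boundary values $k=|m-n|$, $k=m+n+1$ are fine, and your use of \eqref{eq:necfirst}, \eqref{eq:necsecond} for the lower base cases matches the paper. The gap is in the induction step itself.

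You propose to run the two-sided induction of Theorem~\ref{thm:gasperpositivevariant} on the combination $\psi$, with the step ``driven by the shifted Hylleraas recursion'' and controlled by a shifted Lemma~\ref{lma:gRmu} for $\iota^+$. But $\iota^+$ is the middle coefficient in the recursion for $g_R^+$, not for $\psi$; knowing $\psi(s+j-1)>0$ and $\psi(s+j)>0$ gives no control over the individual values $g_R^+(s+j-1)$, $g_R^+(s+j)$ that the Hylleraas recursion actually couples. This is not a cosmetic issue: on $V'\setminus\Delta$ one has $(\alpha,\beta+1)\notin V$ (since $b^+=b-1<0$), and in fact $(-1)^j g_R^+(m,m+s;s+j)>0$ for all $j$, so the individual $g_R^+$ oscillate in sign. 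The positivity of $\psi$ therefore arises from a cancellation between two terms of opposite sign, and the forward/backward monotone mechanism of Theorem~\ref{thm:gasperpositivevariant}---which relies on all terms being nonnegative and on $\iota$ having a single sign change---simply does not transfer. A shifted Lemma~\ref{lma:gRmu} would only tell you that $\iota^+$ has at most one zero (indeed $a^+=a+1>0$), but with $b^+<0$ the sign pattern is reversed, and in any case this controls $g_R^+$, not $\psi$.

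The paper proceeds very differently. It first invokes Corollary~\ref{cor:oscillatory} to establish the strict alternation $(-1)^j g_R^+(m,m+s;s+j)>0$, then passes to the ratio
\[
\phi(j)=\frac{c_{2s+2j+1}^T}{a_{2s+2j-1}^T}\,\frac{g_R^+(m,m+s;s+j)}{g_R^+(m,m+s;s+j-1)}\in(-\infty,0),
\]
which satisfies the Riccati-type recursion $\phi(j+1)=p(j)+q(j)/\phi(j)$. Positivity of $g_T(2m+1,2m+2s+1;2s+2j)$ is equivalent to $(-1)^{j-1}(1+\phi(j))>0$, i.e.\ to $\phi$ oscillating about $-1$ with the correct parity. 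The two-sided induction is then run on $\phi$, and the induction step is powered not by any analogue of Lemma~\ref{lma:gRmu} but by the genuinely new nonlinear inequality of Lemma~\ref{lma:pqinequality},
\[
[1+p(j+1)][q(j)-p(j)]<q(j+1),
\]
whose proof requires its own careful polynomial decompositions. This lemma has no counterpart in your outline, and it is the crux of the argument.
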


The inclusion $V^{\prime}\supsetneq V$ is clear from the rewritten form of $V$ \eqref{eq:Vrewritten} (one can easily find points which show that the inclusion is proper). $V^{\prime}$ is a subset of $[-1/2,\infty)\times(-1,\infty)$, which can be seen as follows: let $(\alpha,\beta)\in V^{\prime}$. If $a\geq0$, then $(\alpha,\beta)\in\Delta\subseteq[-1/2,\infty)\times(-1,\infty)$, and if $a<0$, then
\begin{equation*}
2\underbrace{(b-a)}_{>0}(b+a)=a^2+2b^2+3a-3a(a+1)>0
\end{equation*}
and consequently
\begin{equation*}
0<b+a=2\alpha+1.
\end{equation*}
This establishes $V^{\prime}\subseteq[-1/2,\infty)\times(-1,\infty)$; it is clear that the inclusion is proper. Concerning the geometry of $V^{\prime}$, we note that one obtains the set $\left\{(\alpha,\beta)\in\mathbb{R}^2:a^2+2b^2+3a=0\right\}$ by rotating the ellipse
\begin{equation*}
\left(\frac{x}{\frac{3}{4}\sqrt{2}}\right)^2+\left(\frac{y}{\frac{3}{4}}\right)^2=1
\end{equation*}
by $\pi/4$ and shifting the image by $(-5/4,-5/4)^T$. The small region $V^{\prime}\backslash V$ is bounded on the left by a curve $c^{\prime}$ in the $(\alpha,\beta)$-plane which starts at the point $(\alpha,\beta)=(-1/3,-1)$, approaches the line $\alpha+\beta+1=0$ tangentially and meets this line at the point $(\alpha,\beta)=(-1/2,-1/2)$ (cf. also the related set $W$ considered in \cite{Ga70b}). The angle between the line $\beta=-1$ and $c^{\prime}$ is $\approx86.2^{\circ}$ (in particular, $c^{\prime}$ cannot be written as $\beta=f(\alpha)$ with a single function $f$).\\

As a consequence of Theorem~\ref{thm:mainodd}, we will obtain our second main result of this section and the answer to (A):

\begin{theorem}\label{thm:mainfull}
Let $\alpha,\beta>-1$. The following are equivalent:
\begin{enumerate}
\item[\rm(i)] $(T_n^{(\alpha,\beta)}(x))_{n\in\mathbb{N}_0}$ satisfies nonnegative linearization of products, i.e., all $g_T(m,n;k)$ are nonnegative.
\item[\rm(ii)] $(\alpha,\beta)\in V$.
\end{enumerate}
\end{theorem}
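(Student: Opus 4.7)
The plan is to derive Theorem~\ref{thm:mainfull} as a straightforward corollary of Gasper's result (Theorem~\ref{thm:gasper}) together with Theorem~\ref{thm:mainodd}, exploiting the fact that, via \eqref{eq:gTeven}--\eqref{eq:gTadd}, the linearization coefficients $g_T(m,n;k)$ are governed entirely by $g_R(m,n;k)$ and $g_R^+(m,n;k)$ depending on the parity pattern of $(m,n,k)$.

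For the direction (i)~$\Rightarrow$~(ii), I would specialize to even indices: assuming all $g_T(m,n;k)\geq 0$, the identity $g_T(2m,2n;2k)=g_R(m,n;k)$ from \eqref{eq:gTeven} forces every $g_R(m,n;k)$ to be nonnegative. Theorem~\ref{thm:gasper} then immediately yields $(\alpha,\beta)\in V$.

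For the direction (ii)~$\Rightarrow$~(i), assume $(\alpha,\beta)\in V$. I would split the analysis of $g_T(m,n;k)$ by the parities of the indices. First, if $m+n-k$ is odd, then $g_T(m,n;k)=0$ by the symmetry of the orthogonalization measure $\mathrm{d}\mu_T$ (which is even). Second, the case where both $m$ and $n$ are even with $k$ even is handled by \eqref{eq:gTeven} and Theorem~\ref{thm:gasper}: $g_T(2m,2n;2k)=g_R(m,n;k)\geq 0$. Third, the case where at least one of $m,n$ is odd follows from the already-established Theorem~\ref{thm:mainodd}, provided one can verify $V\subseteq V'$. This last inclusion is transparent from the rewritten form \eqref{eq:Vrewritten}: any $(\alpha,\beta)\in V$ satisfies $b\geq 0$ and
\begin{equation*}
a^2+2b^2+3a\;\geq\;3\frac{(a+1)(a+2)}{(a+3)(a+5)}b^2\;\geq\;0,
\end{equation*}
since for $a>-1$ the factor $(a+1)(a+2)/[(a+3)(a+5)]$ is nonnegative (indeed positive except possibly at $a=-1$). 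Hence $(\alpha,\beta)\in V'$, and Theorem~\ref{thm:mainodd} delivers the nonnegativity of the remaining $g_T(m,n;k)$.

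I do not foresee a genuine obstacle here; all the heavy lifting will already have been carried out in Theorem~\ref{thm:mainodd}, whose proof must grapple with the coupling of $g_R^+(m,n;k)$-values in \eqref{eq:gTodd} and the behavior of the boundary stratum $V\setminus\Delta$ (the latter being the portion requiring the sharpened estimates \eqref{eq:simplifygasperI}--\eqref{eq:simplifygasperII} and Lemma~\ref{lma:gRmu}). The only minor issue to address in writing up Theorem~\ref{thm:mainfull} itself is to make sure every parity combination is covered; the mixed odd-even cases $g_T(2m+1,2n;2k+1)$ and $g_T(2m,2n+1;2k+1)$ fall under Theorem~\ref{thm:mainodd} directly (via \eqref{eq:gTmixed} and \eqref{eq:gTadd}, noting that $h_T(n)>0$), so no extra argument is needed.
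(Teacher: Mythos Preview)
Your proposal is correct and matches the paper's own argument essentially verbatim: the paper also derives Theorem~\ref{thm:mainfull} from Theorem~\ref{thm:mainodd} and Gasper's Theorem~\ref{thm:gasper} by observing that the even--even case is governed by \eqref{eq:gTeven}, the at-least-one-odd case by Theorem~\ref{thm:mainodd}, and then using $V\cap V'=V$. Your write-up is slightly more explicit about the parity split and the inclusion $V\subseteq V'$, but the route is identical.
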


With respect to Theorem~\ref{thm:mainfull}, we note that if $b\geq0$ and $(\alpha,\beta)\notin V$, then $g_T(4,4;4)<0$, which is a consequence of $g_R(2,2;2)<0$ (cf. Section~\ref{sec:intro}) and \eqref{eq:gTeven}. In the same way, we see that if $b<0$, then $g_T(2,2;2)<0$. Theorem~\ref{thm:mainfull} can be regarded as a another sharpening of Gasper's result Theorem~\ref{thm:gasper} because the nontrivial direction ``(ii) $\Rightarrow$ (i) (Theorem~\ref{thm:gasper})'' is trivially implied by ``(ii) $\Rightarrow$ (i) (Theorem~\ref{thm:mainfull})'' and \eqref{eq:gTeven}. As a consequence of Theorem~\ref{thm:mainfull}, we obtain that $(T_n^{(\alpha,\beta)}(x))_{n\in\mathbb{N}_0}$ induces a polynomial hypergroup and an associated $L^1$-algebra \cite{La83} whenever $(\alpha,\beta)\in V$.\\

Comparing Theorem~\ref{thm:gasper}, Theorem~\ref{thm:gasperpositivevariant} and Theorem~\ref{thm:mainfull}, one may ask whether all $g_T(m,n;k)$ are positive if $(\alpha,\beta)$ is located in the interior of $V$. However, this is not true: just recall that for every choice of $(\alpha,\beta)\in(-1,\infty)^2$ one has $g_T(m,n;k)=0$ if $m+n-k$ is odd.\\

We now come to the proofs.\\

Since Theorem~\ref{thm:mainodd} implies that the set of all pairs $(\alpha,\beta)\in(-1,\infty)^2$ such that $(T_n^{(\alpha,\beta)}(x))_{n\in\mathbb{N}_0}$ satisfies nonnegative linearization of products is given by $V\cap V^{\prime}=V$, Theorem~\ref{thm:mainfull} follows from Theorem~\ref{thm:mainodd}.\\

The implication ``(i) $\Rightarrow$ (ii)'' of Theorem~\ref{thm:mainodd} was already established above. In view of Szwarc's earlier result, which already shows that $(T_n^{(\alpha,\beta)}(x))_{n\in\mathbb{N}_0}$ satisfies nonnegative linearization of products at least for all $(\alpha,\beta)\in\Delta$ (cf. Section~\ref{sec:intro}), the converse ``(ii) $\Rightarrow$ (i)'' is a consequence of the assertion made in the second part of Theorem~\ref{thm:mainodd}.\\

In view of these observations, and in view of \eqref{eq:gTmixed} and \eqref{eq:gTadd}, Theorem~\ref{thm:mainodd} and Theorem~\ref{thm:mainfull} trace back to the following lemma:

\begin{lemma}\label{lma:centrallemma}
Let $(\alpha,\beta)\in V^{\prime}\backslash\Delta$, and let $m\in\mathbb{N}$, $s\in\mathbb{N}_0$. Then $g_T(2m+1,2m+2s+1;2s+2j)>0$ for all $j\in\{0,\ldots,2m+1\}$.
\end{lemma}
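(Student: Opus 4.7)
Write $G(j) := g_T(2m+1,2m+2s+1;2s+2j)$ and $H(j) := g_R^+(m,m+s;s+j)$, so that by \eqref{eq:gTodd}
\begin{equation*}
G(0) = c_{2s+1}^T H(0),\quad G(2m+1) = a_{4m+2s+1}^T H(2m),\quad G(j) = a_{2s+2j-1}^T H(j-1) + c_{2s+2j+1}^T H(j)\;(1\le j\le 2m).
\end{equation*}
On $V'\setminus\Delta$ we have $a\in(-1,0)$ and $b>0$ (the latter because $b=0$ forces $a(a+3)\ge 0$ with $a>-1$, contradicting $a<0$); note in particular that the shifted parameters $(a+1,b-1)$ for $(R_n^{(\alpha,\beta+1)})$ generally lie \emph{outside} $V$, so the $H(j)$'s need not all be nonnegative. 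The coefficients $a_n^T,c_n^T$ are strictly positive by \eqref{eq:reccoeffgencheb}, and \eqref{eq:gRs} gives $H(0),H(2m)>0$; hence $G(0),G(2m+1)>0$ without any further restriction.

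\textbf{Near-boundary cases $j=1$ and $j=2m$.} Combining \eqref{eq:necfirst} with $G(1)=a_{2s+1}^T H(0)\bigl[1+(c_{2s+3}^T/a_{2s+1}^T)(H(1)/H(0))\bigr]$ and the fact that $a_{2s+1}^T H(0)>0$, the positivity of $G(1)$ reduces to the claim $P(m):=4bm^2+4b(s+a+1)m+a(2s+a+b+1)>0$ for $m\in\mathbb N,\,s\in\mathbb N_0$. Since $b>0$ and the vertex of the parabola $P$ is at $m=-(s+a+1)/2<0$, $P$ is increasing on $[0,\infty)$, so it suffices to check
\begin{equation*}
P(1)=s(4b+2a)+\bigl(a(a+1)+b(8+5a)\bigr)>0.
\end{equation*}
The coefficient $4b+2a$ is positive on $V'\setminus\Delta$ (otherwise $2b^2\le a^2/2$ gives $a^2+2b^2+3a\le (3/2)a(a+2)<0$), and the constant part $a(a+1)+b(8+5a)>0$ follows from $b^2\ge -a(a+3)/2$ after squaring, as the auxiliary polynomial $(a+3)(8+5a)^2+2a(a+1)^2=3(9a^3+53a^2+102a+64)$ is positive on $(-1,0)$ by elementary one-variable analysis (its derivative has roots below $-1$). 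An entirely analogous argument based on \eqref{eq:necsecond} and \eqref{eq:gR2ms1} handles $G(2m)$.

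\textbf{Middle values ($m\ge 2$, $2\le j\le 2m-1$).} If $m=1$ we are done; otherwise one runs a two-sided induction analogous to the one in the proof of Theorem~\ref{thm:gasperpositivevariant}. The key input is an analogue of Lemma~\ref{lma:gRmu} applied to $\iota^+(m,m+s;\cdot)$ (with shifted parameters $(a+1,b-1)$), ensuring at most one sign change on $[1,2m-1]$; together with $\theta^+,\kappa^+>0$ on the relevant ranges, this controls the propagation of positivity through \eqref{eq:gRrecurrence}. Since the $H(j)$'s themselves are not known to be positive, the induction is run on the combinations $G(j)$ directly: one eliminates $H(j-2)$ from the recurrence \eqref{eq:gRrecurrence} using the identity $G(j-1)=a_{2s+2j-3}^T H(j-2)+c_{2s+2j-1}^T H(j-1)$ to obtain a $2\times 2$ linear system expressing $(H(j-1),H(j))$ in terms of $(G(j-1),G(j))$, and thereby expresses $G(j+1)$ as an affine combination of $G(j-1)$ and $G(j)$ whose coefficients can be sign-analysed via the sign pattern of $\iota^+$.

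\textbf{Main obstacle.} The genuinely novel difficulty, absent in Gasper's Jacobi case, is that the individual $H(j)$'s may be negative throughout the middle range (precisely when $b<1$, so that $(\alpha,\beta+1)\notin V$). The positivity of $G(j)$ must therefore come from \emph{cancellation} inside the linear combination $a_{2s+2j-1}^T H(j-1)+c_{2s+2j+1}^T H(j)$, and the whole technical burden of the proof is to exhibit algebraic decompositions (in the spirit of \eqref{eq:simplifygasperI}, \eqref{eq:simplifygasperII}) that make the positivity visible under the sole constraint $a^2+2b^2+3a\ge 0,\,b\ge 0$. Finding such decompositions---presumably the step where computer-algebra assistance alluded to in Section~\ref{sec:intro} is most valuable---is the crux of the argument.
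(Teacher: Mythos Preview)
Your outline correctly identifies the boundary cases and the overall two-sided-induction architecture, but the middle part has a genuine gap, and the missing idea is not merely ``finding the right decompositions.''

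The key structural fact you overlook is that the $H(j)=g_R^+(m,m+s;s+j)$ are not just ``possibly negative'': on $V'\setminus\Delta$ one has $(\beta+1,\alpha)\in\Delta^\circ\subset V^\circ$, so Corollary~\ref{cor:oscillatory} (ii) gives the \emph{strict alternating} pattern $(-1)^j H(j)>0$ for every $j\in\{0,\ldots,2m\}$. This turns the problem into a quantitative one: writing $\phi(j):=\dfrac{c_{2s+2j+1}^T}{a_{2s+2j-1}^T}\,\dfrac{H(j)}{H(j-1)}\in(-\infty,0)$, the positivity of $G(j)$ is exactly $(-1)^{j-1}(1+\phi(j))>0$, i.e.\ $\phi$ must alternate between $(-\infty,-1)$ and $(-1,0)$. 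The paper's induction is run on $\phi$, not on $G$, and its engine is the purely algebraic inequality of Lemma~\ref{lma:pqinequality},
\[
[1+p(j+1)][q(j)-p(j)]<q(j+1),
\]
where $p,q$ encode the Hylleraas recursion \eqref{eq:gRrecurrence} for the shifted parameters. This inequality is precisely what propagates $\phi(2j)<-1\Rightarrow\phi(2j+2)<-1$ forward and $\phi(2j-1)>-1\Rightarrow\phi(2j-3)>-1$ backward; the base cases are $\phi(2)<-1$ (via \eqref{eq:necsecond}) and $\phi(2m-1)>-1$ (via a companion identity), not $G(1)$ and $G(2m)$.

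By contrast, your plan to transplant Lemma~\ref{lma:gRmu} (the ``$\iota^+$ has at most one sign change'' trick) does not do the job here: that lemma is tailored to a situation where the $H(j)$ themselves are positive, so that a single sign change in $\iota$ separates a forward-positive regime from a backward-positive one. With alternating $H(j)$, the product $\iota^+(j)H(j)$ no longer has a clean sign pattern, and expressing $G(j+1)$ linearly in $G(j),G(j-1)$ produces coefficients whose signs are not controlled by the zero of $\iota^+$ alone. So the induction step, as you sketch it, would not close without an ingredient equivalent in strength to Lemma~\ref{lma:pqinequality} --- which is exactly the ``crux'' you defer. In short: supply the oscillation of $H$ from Corollary~\ref{cor:oscillatory}, reformulate in terms of the ratio $\phi$, and prove (or invoke) the $p$--$q$ inequality; then the induction goes through.
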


Due to the positivity of the sequences $(a_n^T)_{n\in\mathbb{N}}$ and $(c_n^T)_{n\in\mathbb{N}}$, the assertion of Lemma~\ref{lma:centrallemma} is also true for $m=0$, of course.\\

Our task is to establish Lemma~\ref{lma:centrallemma}, which will be done via Corollary~\ref{cor:oscillatory}, another ``two-sided induction'' argument (cf. the proof of Theorem~\ref{thm:gasperpositivevariant}) and an auxiliary result. The latter will be needed for the (more involved) induction step.\\

For the rest of the section, we always assume that $(\alpha,\beta)\in V^{\prime}\backslash\Delta$ and that $m\in\mathbb{N}$, $s\in\mathbb{N}_0$.\\

Under these conditions, we have
\begin{equation*}
a\in\left(-\frac{1}{3},0\right)
\end{equation*}
and
\begin{equation*}
b\in(-a,1+a)\subseteq(0,1).
\end{equation*}
The inequality $a>-1/3$ follows immediately from
\begin{equation*}
0\leq a^2+2b^2+3a<a^2+2(1+a)^2+3a=(a+2)(3a+1)
\end{equation*}
(and can also be found in \cite{Ga70b}). The inequality $b>-a$ was shown above.\\

We now define two auxiliary functions $p:\{1,\ldots,2m-1\}\rightarrow\mathbb{R}$, $q:\{1,\ldots,2m-1\}\rightarrow(0,\infty)$ by
\begin{align*}
p(j)&:=\frac{c_{2s+2j+3}^T}{a_{2s+2j+1}^T}\frac{\iota^+(m,m+s;j)}{\theta^+(m,m+s;j)},\\
q(j)&:=\frac{c_{2s+2j+1}^T c_{2s+2j+3}^T}{a_{2s+2j-1}^T a_{2s+2j+1}^T}\frac{\kappa^+(m,m+s;j)}{\theta^+(m,m+s;j)}.
\end{align*}
Concerning well-definedness, observe that $\theta^+(m,m+s;.)$ and $\kappa^+(m,m+s;.)$ are positive on their full domains, which follows directly from the definitions \eqref{eq:gRlambda}, \eqref{eq:gRnu}. Using \eqref{eq:gRlambda} to \eqref{eq:gRnu} and \eqref{eq:reccoeffgencheb}, we compute
\begin{equation}\label{eq:pqdecompose}
\begin{split}
p(j)&=p^{\infty}(j)+\frac{p^{\ast}(j)}{(2m-j+a)(2m+2s+j+a+2)},\\
q(j)&=q^{\infty}(j)+\frac{q^{\ast}(j)}{(2m-j+a)(2m+2s+j+a+2)}
\end{split}
\end{equation}
for all $j\in\{1,\ldots,2m-1\}$, where the four functions $p^{\infty}:\mathbb{N}\rightarrow(-1,\infty)$, $p^{\ast},q^{\infty},q^{\ast}:\mathbb{N}\rightarrow(0,\infty)$ are given by
\begin{equation}\label{eq:pqdecomposeexplicit}
\begin{split}
p^{\infty}(j)&=-1+\frac{2s+2j+a+2}{(2s+j+1)(2s+2j+a)(2s+2j+a+b+1)(j+1)}\\
&\quad\quad\quad\quad\times\left[b(2s+j+1)(2s+2j+a)(j+1)+(1-b)(2s+j)(2s+2j+a+1)j\right],\\
p^{\ast}(j)&=(1-b)\frac{(2s+j+a)(2s+2j+a+1)(2s+2j+a+2)(j+a)(2s+2j+1)}{(2s+j+1)(2s+2j+a)(2s+2j+a+b+1)(j+1)},\\
q^{\infty}(j)&=\frac{(2s+2j+a+2)(2s+j+a)(2s+2j+a-b+1)(j+a)}{(2s+j+1)(2s+2j+a)(2s+2j+a+b+1)(j+1)},\\
q^{\ast}(j)&=\frac{2s+2j+a+2}{(2s+j+1)(2s+2j+a)(2s+2j+a+b+1)(j+1)}\\
&\quad\times(1-a)(2s+j+a)(2s+2j+a+1)(j+a)(2s+2j+a-b+1).
\end{split}
\end{equation}
Note that the functions $p^{\infty}$, $p^{\ast}$, $q^{\infty}$ and $q^{\ast}$ are independent of $m$. The superscript ``$\infty$'' is used because $p^{\infty}$ and $q^{\infty}$ are just the pointwise limits of $p$ and $q$ as $m$ tends to infinity.\\

As a first consequence of \eqref{eq:pqdecompose} and \eqref{eq:pqdecomposeexplicit}, we obtain that $p$ maps into the interval $(-1,\infty)$.\\

The following lemma is the announced auxiliary result and provides an inequality in $p$ and $q$ which will be central in the proof of Lemma~\ref{lma:centrallemma}.

\begin{lemma}\label{lma:pqinequality}
Let $(\alpha,\beta)\in V^{\prime}\backslash\Delta$ and $m\geq2$, $s\in\mathbb{N}_0$. Then for every $j\in\{1,\ldots,2m-2\}$ the inequality
\begin{equation}\label{eq:pqinequality}
[1+p(j+1)][q(j)-p(j)]<q(j+1)
\end{equation}
is valid.
\end{lemma}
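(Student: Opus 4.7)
The plan is to convert the inequality \eqref{eq:pqinequality} into an explicit polynomial inequality in the variables $j,s,m,a,b$ and then exhibit a decomposition that makes the sign visible under the constraints defining $V'\setminus\Delta$, in the spirit of the decompositions \eqref{eq:simplifygasperI}--\eqref{eq:simplifygasperII} used for the Jacobi case. Writing $M_j:=(2m-j+a)(2m+2s+j+a+2)$, I first substitute the explicit formulas \eqref{eq:pqdecompose} and \eqref{eq:pqdecomposeexplicit} for $p(j),q(j),p(j+1),q(j+1)$ and bring everything over the common denominator built from $M_j$, $M_{j+1}$, and the denominators appearing in $p^\infty,p^\ast,q^\infty,q^\ast$ at arguments $j$ and $j+1$. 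Each of these factors is strictly positive on the admissible range (using $j\in\{1,\ldots,2m-2\}$, $s\in\mathbb{N}_0$, $m\geq 2$, together with $a\in(-1/3,0)$, $b\in(-a,1+a)$, and $a+b+1>0$), so \eqref{eq:pqinequality} becomes equivalent to a polynomial inequality of the form $\Psi(m,s,j,a,b)>0$ for a concretely computable $\Psi$.

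Next I exploit the structural positivities provided by the hypothesis $(\alpha,\beta)\in V'\setminus\Delta$, namely
\[
a+1>0,\quad 3a+1>0,\quad 1-a>0,\quad 1-b>0,\quad b+a>0,\quad 1+a-b>0,\quad a^2+2b^2+3a\geq 0.
\]
The first three of these are bounded away from zero on $V'\setminus\Delta$, while the last three are the ``active'' constraints whose saturation defines the boundary of the region. This suggests seeking $\Psi$ in the form of a conical combination
\[
\Psi=(a^2+2b^2+3a)\,R_1+(b+a)(1+a-b)\,R_2+\cdots,
\]
whose remaining coefficients $R_i$ are polynomials in $j,s,m$ and in the ``safe'' quantities $a+1,\,3a+1,\,1-a$ that are manifestly nonnegative for $j\in\{1,\ldots,2m-2\}$, $s\geq 0$, $m\geq 2$. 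Strictness of \eqref{eq:pqinequality} would then follow from any summand whose $R_i$ is strictly positive throughout $V'\setminus\Delta$, the $(b+a)(1+a-b)$-piece being the natural candidate since it does not degenerate on the boundary $a^2+2b^2+3a=0$.

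The main obstacle is locating such an explicit decomposition of $\Psi$. As the authors indicate in the Introduction, similar decompositions in this paper are found with the aid of a computer algebra system (Maple); once a candidate is produced, verification reduces to expanding both sides and comparing coefficients in $j,s,m,a,b$, a mechanical but tedious task. The conceptual input is the choice of factors to pull out, and the boundary geometry of $V'\setminus\Delta$ dictates exactly the factors $a^2+2b^2+3a$, $b+a$, and $1+a-b$. With this ansatz in place, the remaining $R_i$ should split into summands whose positivity is controlled purely by the ranges of $j,s,m$, closing the proof in much the same way as \eqref{eq:simplifygasperI}--\eqref{eq:simplifygasperII} close the analogous initial step in the proof of Theorem~\ref{thm:gasperpositivevariant}.
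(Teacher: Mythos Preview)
Your proposal is a plan, not a proof: you reduce \eqref{eq:pqinequality} to a polynomial inequality $\Psi(m,s,j,a,b)>0$ and describe an ansatz for a positivity-certifying decomposition of $\Psi$, but you never produce one. Since exhibiting such a decomposition is the entire content of the lemma, this is a genuine gap. Your intuition about which factors to pull out is also off: the inequality \eqref{eq:pqinequality} is \emph{strict} on all of $V'\setminus\Delta$, including the boundary arc $a^{2}+2b^{2}+3a=0$, so there is no reason for that quantity to appear as a factor---and in the paper's proof it never does.

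The paper proceeds quite differently. Rather than attacking the full five-variable polynomial $\Psi$ at once, it first uses the built-in splitting \eqref{eq:pqdecompose}--\eqref{eq:pqdecomposeexplicit} to separate off the $m$-independent ``limit'' piece
\[
\omega_{j}:=q^{\infty}(j+1)-[1+p^{\infty}(j+1)][q^{\infty}(j)-p^{\infty}(j)],
\]
which is computed in closed form and shown to be positive via the simple factor $(b-a)b$. The full quantity $q(j+1)-[1+p(j+1)][q(j)-p(j)]$, after dividing by $\omega_{j}$ and the positive $m$-dependent denominators, is then rewritten as $f(m)$ for an explicit quartic $f$ in a real variable $x\ge(j-a+1)/2$. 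Positivity of $f$ is established by calculus: one checks $f((j-a+1)/2)>0$ and $f'(x)>0$ on the whole half-line, each via a single explicit decomposition whose positivity uses only $b>0$, $1-b>0$, $a<0$ and the ranges of $j,s$. Thus three moderate decompositions (for $\omega_{j}$, for $f$ at the endpoint, and for the key sum $\sigma_{j}+\alpha_{j}+\beta_{j}$ inside $f'$) replace the single large one you are hoping for, and the isolation of $m$ is what makes each of them tractable.
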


\begin{proof}
The basic idea is to use \eqref{eq:pqdecompose} and \eqref{eq:pqdecomposeexplicit} in order to isolate $m$ in an appropriate way. Let $j\in\{1,\ldots,2m-2\}$. We decompose
\begin{equation}\label{eq:factorm}
\begin{split}
&q(j+1)-[1+p(j+1)][q(j)-p(j)]=\\
&=q^{\infty}(j+1)-[1+p^{\infty}(j+1)][q^{\infty}(j)-p^{\infty}(j)]\\
&\quad+\frac{q^{\ast}(j+1)-p^{\ast}(j+1)[q^{\infty}(j)-p^{\infty}(j)]}{(2m-j+a-1)(2m+2s+j+a+3)}\\
&\quad-\frac{[1+p^{\infty}(j+1)][q^{\ast}(j)-p^{\ast}(j)]}{(2m-j+a)(2m+2s+j+a+2)}\\
&\quad-\frac{p^{\ast}(j+1)[q^{\ast}(j)-p^{\ast}(j)]}{(2m-j+a-1)(2m-j+a)(2m+2s+j+a+2)(2m+2s+j+a+3)}
\end{split}
\end{equation}
and compute
\begin{equation}\label{eq:omegaj}
\begin{split}
\omega_j:&=q^{\infty}(j+1)-[1+p^{\infty}(j+1)][q^{\infty}(j)-p^{\infty}(j)]=\\
&=\frac{(b-a)b[2s(2s+2j+a+2)+(j+a)(2j+4)+1-a]}{(2s+j+1)(2s+j+2)(j+1)(j+2)}\\
&\quad\times\frac{(2s+2j+a+2)(2s+2j+a+4)}{(2s+2j+a+b+1)(2s+2j+a+b+3)}>\\
&>0.
\end{split}
\end{equation}
Combining \eqref{eq:omegaj} with \eqref{eq:factorm}, we obtain
\begin{equation}\label{eq:factormmod}
\begin{split}
&\frac{(2m-j+a-1)(2m-j+a)(2m+2s+j+a+2)(2m+2s+j+a+3)}{\omega_j}\\
&\quad\times[q(j+1)-[1+p(j+1)][q(j)-p(j)]]=\\
&=[(2m-j+a-1)(2m+2s+j+a+3)+\alpha_j]\\
&\quad\times[(2m-j+a)(2m+2s+j+a+2)+\beta_j]+\rho_j=\\
&=[(2m-j+a-1)((2m-j+a-1)+\sigma_j+1)+\alpha_j]\\
&\quad\times[((2m-j+a-1)+1)((2m-j+a-1)+\sigma_j)+\beta_j]+\rho_j
\end{split}
\end{equation}
with
\begin{align*}
\alpha_j&:=\frac{q^{\ast}(j+1)-p^{\ast}(j+1)[q^{\infty}(j)-p^{\infty}(j)]}{\omega_j},\\
\beta_j&:=-\frac{[1+p^{\infty}(j+1)][q^{\ast}(j)-p^{\ast}(j)]}{\omega_j},\\
\rho_j&:=-\frac{p^{\ast}(j+1)[q^{\ast}(j)-p^{\ast}(j)]}{\omega_j}-\alpha_j\beta_j,\\
\sigma_j&:=2s+2j+3.
\end{align*}
We now define
\begin{equation*}
f:\left[\frac{j-a+1}{2},\infty\right)\rightarrow\mathbb{R}
\end{equation*}
by
\begin{equation*}
\begin{split}
f(x):&=[(2x-j+a-1)((2x-j+a-1)+\sigma_j+1)+\alpha_j]\\
&\quad\times[((2x-j+a-1)+1)((2x-j+a-1)+\sigma_j)+\beta_j]+\rho_j
\end{split}
\end{equation*}
and \textit{claim} that $f$ maps into $(0,\infty)$; once the claim is proven, inequality \eqref{eq:pqinequality} will follow for $j$ via
\begin{equation*}
m\in\left[\frac{j-a+1}{2},\infty\right),
\end{equation*}
\eqref{eq:omegaj} and \eqref{eq:factormmod}. To establish the claim, we first compute
\begin{equation*}
\begin{split}
f^{\prime}(x)&=[4(2x-j+a-1)+2\sigma_j+2]\\
&\quad\times\left[((2x-j+a-1)+1)((2x-j+a-1)+\sigma_j)+\beta_j\right.\\
&\quad\quad\left.+(2x-j+a-1)((2x-j+a-1)+\sigma_j+1)+\alpha_j\right].
\end{split}
\end{equation*}
Then, two rather tedious calculations yield
\begin{align*}
f\left(\frac{j-a+1}{2}\right)&=\alpha_j(\sigma_j+\beta_j)+\rho_j=\\
&=\frac{(2s+j+a+1)(2s+2j+3)(2s+2j+a+3)(j+a+1)}{b[2s(2s+2j+a+2)+(j+a)(2j+4)+1-a]}\\
&\quad\times[b(2s+j+1)(j+1)+(1-b)(2-a)(2s+2j+a+1)]>\\
&>0
\end{align*}
and, for each $x\geq(j-a+1)/2$,
\begin{align*}
&((2x-j+a-1)+1)((2x-j+a-1)+\sigma_j)+\beta_j\\
&\quad+(2x-j+a-1)((2x-j+a-1)+\sigma_j+1)+\alpha_j\geq\\
&\geq\sigma_j+\beta_j+\alpha_j=\\
&=\frac{1}{b[2s(2s+2j+a+2)+(j+a)(2j+4)+1-a]}\\
&\quad\times\left[b\left((1-a)(2s+j+2)(2s+j+a)(2s+2j+a+3)\right.\right.\\
&\quad\quad\quad\left.\left.+(1-a)(2s+2j+a+3)(j+1)(j+a+1)\right.\right.\\
&\quad\quad\quad\left.\left.+(2s+j+2)(2s+j+a+1)(j+a)(2j+4)\right.\right.\\
&\quad\quad\quad\left.\left.+2s(2s+2j+3)(2s+2j+a+2)+(j+1)(j+a)(2j+4)\right.\right.\\
&\quad\quad\quad\left.\left.+(1-a)(2s+2j+3)\right)\right.\\
&\quad\quad\left.+(1-b)\left((2s+j)(2j+a+2)+(2+a)j+2+3a\right)\right.\\
&\quad\quad\quad\left.\times(2s+2j+a+1)(2s+2j+a+3)\right]>\\
&>0
\end{align*}
(like in Section~\ref{sec:jacobi}, verifying such decompositions, which allow to directly see positivity, is easy, so the actual difficulty is finding them). Obviously, we also have
\begin{equation*}
4(2x-j+a-1)+2\sigma_j+2\geq2\sigma_j+2>0
\end{equation*}
for every $x\geq(j-a+1)/2$, so $f^{\prime}$ maps into $(0,\infty)$. This finishes the proof.
\end{proof}

We now come to the proof of Lemma~\ref{lma:centrallemma}.

\begin{proof}[Proof (Lemma~\ref{lma:centrallemma})]
As a consequence of Corollary~\ref{cor:oscillatory}, all numbers
\begin{equation*}
(-1)^j g_R^+(m,m+s;s+j),\;j\in\{0,\ldots,2m\},
\end{equation*}
are positive (observe that $(\beta+1,\alpha)$ is located in the interior of $\Delta$).\footnote{Alternatively, the positivity of the numbers $(-1)^j g_R^+(m,m+s;s+j)$ can be obtained from \eqref{eq:alphabetachange} and Rahman's formula \eqref{eq:Rahmanspecial} (take into account that $\beta+1>0>\alpha>-1/2$).} Hence, we may define $\phi:\{1,\ldots,2m\}\rightarrow(-\infty,0)$,
\begin{equation*}
\phi(j):=\frac{c_{2s+2j+1}^T}{a_{2s+2j-1}^T}\frac{g_R^+(m,m+s;s+j)}{g_R^+(m,m+s;s+j-1)}.
\end{equation*}
As a consequence of \eqref{eq:gRrecurrence}, we have
\begin{align*}
&p(j)+\frac{q(j)}{\phi(j)}=\\
&=\frac{c_{2s+2j+3}^T}{a_{2s+2j+1}^T}\frac{\iota^+(m,m+s;j)}{\theta^+(m,m+s;j)}\\
&\quad+\frac{c_{2s+2j+1}^T c_{2s+2j+3}^T}{a_{2s+2j-1}^T a_{2s+2j+1}^T}\frac{\kappa^+(m,m+s;j)}{\theta^+(m,m+s;j)}\frac{a_{2s+2j-1}^T}{c_{2s+2j+1}^T}\frac{g_R^+(m,m+s;s+j-1)}{g_R^+(m,m+s;s+j)}=\\
&=\frac{c_{2s+2j+3}^T}{a_{2s+2j+1}^T}\frac{g_R^+(m,m+s;s+j+1)}{g_R^+(m,m+s;s+j)}
\end{align*}
and obtain the recurrence relation
\begin{equation*}
\phi(j+1)=p(j)+\frac{q(j)}{\phi(j)}\;(1\leq j\leq2m-1).
\end{equation*}
We now use this recurrence relation and induction to show that
\begin{equation}\label{eq:phieven}
\phi(2j)<-1
\end{equation}
and
\begin{equation}\label{eq:phiodd}
\phi(2j-1)>-1
\end{equation}
for all $j\in\{1,\ldots,m\}$. As a consequence of \eqref{eq:necsecond} and
\begin{equation*}
a+2b>-a>0,
\end{equation*}
we see that $\phi(2)<-1$. Moreover, making use of \eqref{eq:gRrecurrence}, which yields
\begin{align*}
&\frac{g_R^+(m,m+s;s+2m-2)}{\underbrace{g_R^+(m,m+s;s+2m-1)}_{\neq0}}=\\
&=-\frac{\iota^+(m,m+s;2m-1)}{\underbrace{\kappa^+(m,m+s;2m-1)}_{>0}}+\frac{\theta^+(m,m+s;2m-1)}{\kappa^+(m,m+s;2m-1)}\frac{g_R^+(m,m+s;s+2m)}{g_R^+(m,m+s;s+2m-1)},
\end{align*}
and combining this with \eqref{eq:gRlambda} to \eqref{eq:gRnu}, \eqref{eq:gR2ms1} and \eqref{eq:reccoeffgencheb}, we obtain that
\begin{equation*}
\begin{split}
&4(b-1)\frac{(2m+a-1)(2m+s+a)(2m+2s+a-1)(4m+2s+a-b-1)}{4m+2s+a-2}\\
&\quad\times\left(\frac{a_{4m+2s-3}^T}{c_{4m+2s-1}^T}\frac{g_R^+(m,m+s;s+2m-2)}{g_R^+(m,m+s;s+2m-1)}+1\right)=\\
&=(2m+a-1)(2m+2s+a-1)\\
&\quad\quad\times\left[(a^2+2b^2+3a)(2m+s-1)-a(a+1)(2m+s-2)+(2+2a)b^2\right]\\
&\quad+(a+1)b(2-b)(4m+2s+a)(4m+2s+2a-1).
\end{split}
\end{equation*}
Therefore, we obtain that $\phi(2m-1)>-1$.\\

If $m=1$, then \eqref{eq:phieven} and \eqref{eq:phiodd} are already verified to hold for all $j\in\{1,\ldots,m\}$ by the preceding calculations; hence, we assume that $m\geq2$ from now on. Let $j\in\{1,\ldots,m-1\}$ be arbitrary but fixed and assume that $\phi(2j)<-1$. Then
\begin{equation*}
\phi(2j+1)=p(2j)+\frac{q(2j)}{\phi(2j)}>p(2j)-q(2j).
\end{equation*}
Since $p$ maps into $(-1,\infty)$, we obtain
\begin{equation*}
(1+p(2j+1))\phi(2j+1)>(1+p(2j+1))(p(2j)-q(2j)),
\end{equation*}
and now Lemma~\ref{lma:pqinequality} implies that
\begin{equation*}
(1+p(2j+1))\phi(2j+1)>-q(2j+1).
\end{equation*}
Since $\phi(2j+1)<0$, the latter equation yields
\begin{equation*}
\phi(2j+2)=p(2j+1)+\frac{q(2j+1)}{\phi(2j+1)}<-1.
\end{equation*}
Finally, let $j\in\{2,\ldots,m\}$ be arbitrary but fixed and assume that $\phi(2j-1)>-1$. We have
\begin{equation*}
\frac{1}{\phi(2j-2)}=\frac{1}{q(2j-2)}(\phi(2j-1)-p(2j-2))>-\frac{1}{q(2j-2)}(1+p(2j-2)),
\end{equation*}
so
\begin{equation*}
1+p(2j-2)>-\frac{q(2j-2)}{\phi(2j-2)}.
\end{equation*}
Since
\begin{equation*}
0>\phi(2j-2)=p(2j-3)+\frac{q(2j-3)}{\phi(2j-3)},
\end{equation*}
we can conclude that
\begin{equation*}
(1+p(2j-2))\left(p(2j-3)+\frac{q(2j-3)}{\phi(2j-3)}\right)<-q(2j-2).
\end{equation*}
Now we apply Lemma~\ref{lma:pqinequality} again and obtain
\begin{equation*}
(1+p(2j-2))\left(p(2j-3)+\frac{q(2j-3)}{\phi(2j-3)}\right)<(1+p(2j-2))(p(2j-3)-q(2j-3)).
\end{equation*}
Since $p$ maps into $(-1,\infty)$, this shows that
\begin{equation*}
p(2j-3)+\frac{q(2j-3)}{\phi(2j-3)}<p(2j-3)-q(2j-3)
\end{equation*}
or, equivalently, $\phi(2j-3)>-1$, which finishes the induction. Hence, \eqref{eq:phieven} and \eqref{eq:phiodd} are established to hold for all $j\in\{1,\ldots,m\}$ (for every $m\geq1$). Combining this with the positivity of all numbers $(-1)^j g_R^+(m,m+s;s+j)$ (see above) and \eqref{eq:gTodd}, we can conclude that all
\begin{align*}
&g_T(2m+1,2m+2s+1;2s+2j)=\\
&=a_{2s+2j-1}^T g_R^+(m,m+s;s+j-1)+c_{2s+2j+1}^T g_R^+(m,m+s;s+j)=\\
&=a_{2s+2j-1}^T\cdot\underbrace{(-1)^{j-1}g_R^+(m,m+s;s+j-1)}_{>0}\cdot\underbrace{(-1)^{j-1}(1+\phi(j))}_{>0},
\end{align*}
$j\in\{1,\ldots,2m\}$, are positive. Since the positivity of $g_T(2m+1,2m+2s+1;2s)$ and $g_T(2m+1,2m+2s+1;4m+2s+2)$ is clear, the proof is complete.
\end{proof}

\appendix

\section{Alternative proof of Lemma~\ref{lma:gRmu} via the mean value theorem}

As announced in Section~\ref{sec:jacobi}, we give an alternative proof of Lemma~\ref{lma:gRmu}. In fact, we establish the following stronger characterization result:

\begin{proposition}\label{prp:gRmu}
Let $\alpha,\beta>-1$, and let for $m\in\mathbb{N}$ and $s\in\mathbb{N}_0$ the function $\iota(m,m+s;.):[1,2m-1]\rightarrow\mathbb{R}$ be defined by \eqref{eq:gRmu}. If $b\neq0$ (i.e., if $\alpha\neq\beta$), then the following are equivalent:
\begin{enumerate}
\item[\rm(i)] For all $m\in\mathbb{N}$ and $s\in\mathbb{N}_0$, $\iota(m,m+s;.)$ has at most one zero.
\item[\rm(ii)] $a>-11/8+\sqrt{73}/8\approx-0.307$.
\end{enumerate}
Moreover, if $a\geq-11/8+\sqrt{73}/8$, then $\iota(m,m+s;1)\geq0$ for all $m\geq2,s\geq0$ if and only if $b\geq0$. Furthermore, if $a\geq0$, then $\iota(m,m+s;1)\geq0$ for all $m\geq1,s\geq0$ if and only if $b\geq0$.
\end{proposition}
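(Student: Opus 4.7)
Since $\iota(m,m+s;j)=b\,\tilde\iota(m,m+s;j)$, where $\tilde\iota$ denotes the bracket in \eqref{eq:gRmu} and is independent of $b$, the zeros of $\iota$ coincide with those of $\tilde\iota$ and the signs are controlled by $\mathrm{sgn}(b)$. My plan has two algebraic ingredients (a ``biquadratic form'' and an ``explicit $j=1$ evaluation'') plus a synthesis.

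\textit{Ingredient 1 (biquadratic form).} Write $\tilde\iota(m,m+s;j)=g(j+1)-g(j)$ with
\begin{equation*}
g(j):=\frac{j(2s+j)(2m+1-j)(2m+2s+j+2a-1)}{2s+2j+a-1},
\end{equation*}
and substitute $v:=2s+2j+a$. Using the built-in symmetry of the numerator of $g$ (roots in $v-1$ at $0,-2s,2m+1,-(2m+2s+2a-1)$), a direct expansion produces
\begin{equation*}
8(v-1)(v+1)\,\tilde\iota(m,m+s;j)=-3v^{4}+(P+2)v^{2}-(P+R-1),
\end{equation*}
with $P:=(4m+2s+2a)^{2}+2(1-a)^{2}+4s^{2}$ and $R:=\bigl((4m+2s+2a)^{2}-(1-a)^{2}\bigr)\bigl((1-a)^{2}-4s^{2}\bigr)$. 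This is the mean-value-theorem-style replacement for Descartes' rule: the right-hand side is a quadratic in $v^{2}$, so the continuous extension of $\tilde\iota$ admits at most two zeros as a function of $v>0$.

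\textit{Ingredient 2 (value at $j=1$).} Direct simplification gives
\begin{equation*}
\tilde\iota(m,m+s;1)=\frac{4\bigl[Q(s)\,m(m+s+a)-(s+1)(2s+a+1)(2s+2a+1)\bigr]}{(2s+a+1)(2s+a+3)},
\end{equation*}
with $Q(s):=4s^{2}+(2a+4)s+(3a+1)$, together with $\tilde\iota(1,1+s;1)=4a(a+1)/[(2s+a+1)(2s+a+3)]$. Read as a quadratic in $m$ with positive leading coefficient $Q(s)$ (positive for all $s\ge0$ whenever $a>-1/3$) and vertex at $m=-(s+a)/2<1$, the bracket is strictly increasing on $m\ge1$, so it suffices to control its $m=2$ value. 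An expansion in $s$ yields
\begin{equation*}
2Q(s)(s+a+2)-(s+1)(2s+a+1)(2s+2a+1)=4s^{3}+(6a+16)s^{2}+(2a^{2}+13a+13)s+(4a^{2}+11a+3),
\end{equation*}
and each of the four coefficients is $\ge0$ exactly when $a\ge -11/8+\sqrt{73}/8$, with the constant $4a^{2}+11a+3$ pinning down the threshold. This immediately yields Parts 2 and 3; the ``$b\ge0$'' direction uses $\tilde\iota(2,3;1)=24(a+2)/(a+5)>0$ as a universal witness, and the $m=1$ formula closes Part 3 for $a\ge 0$.

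\textit{Synthesis for Part 1, and main obstacle.} For (ii)$\Rightarrow$(i): Ingredient 2 gives $\tilde\iota(m,m+s;1)>0$ strictly whenever $a>-11/8+\sqrt{73}/8$ and $m\ge2$; via Ingredient 1 at $j=1$, this places $(2s+a+2)^{2}$ strictly between the two positive roots $v_{-}^{2}<v_{+}^{2}$ of the biquadratic, so only $v_{+}$ can correspond to a zero of $\tilde\iota$ on $[1,2m-1]$ (the case $m=1$ is trivial). For (i)$\Rightarrow$(ii) I would take $(m,s)=(2,0)$ as witness: at or below the threshold, $\tilde\iota(2,2;1)=4(4a^{2}+11a+3)/[(a+1)(a+3)]\le 0$, and an elementary comparison of $(a+2)^{2},(a+6)^{2}$ with the midpoint $(P+2)/6$ and sum $(P+2)/3$ of the biquadratic's roots shows both $v_{\pm}$ lie in the $v$-image of $[1,3]$, producing two zeros of $\tilde\iota$ there. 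The delicate step throughout is Ingredient 1 — arriving at the clean biquadratic form and observing that all odd powers of $v$ cancel — while everything else reduces to routine coefficient inspection, with $(m,s)=(2,0)$ at $a=-11/8+\sqrt{73}/8$ providing the unique one-parameter degeneracy that exactly pins down the threshold.
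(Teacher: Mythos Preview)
Your proof is correct, and the core of it---Ingredient~1---is a genuinely different route from the paper's. The paper splits the implication (ii)$\Rightarrow$(i) into two cases: for $s\neq0$ or $a\geq0$ it writes $\tilde\iota(j)=f(j+1)-f(j)$ and applies the mean value theorem to bound the number of zeros of $f'$ (via a monotonicity argument on $u'/u+v'/v$ with $f=uv$); for the remaining case $s=0$, $a<0$ it instead analyzes the quartic $\chi_m(j)$ directly and locates three of its roots in $(-\infty,1)$. Your biquadratic identity in $v=2s+2j+a$ handles both cases at once: it shows $\tilde\iota$ has at most two zeros for $v>0$ regardless of $(s,a)$, and then your strict positivity $\tilde\iota(m,m+s;1)>0$ for $m\geq2$ pins the smaller root below the admissible range. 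This is more unified and arguably more transparent than the paper's two-track argument; the trade-off is that discovering and verifying the biquadratic identity (the cancellation of all odd powers of $v$) is the one nontrivial algebraic step, whereas the paper's mean-value approach is conceptually immediate once one writes $\tilde\iota$ as a difference $f(j+1)-f(j)$.

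Your Ingredient~2 and the handling of Parts~2--3 coincide in substance with the paper's Step~3 (the paper gives two equivalent rearrangements of $\tilde\iota(m,m+s;1)$ without isolating your $Q(s)$, but the content is the same). For (i)$\Rightarrow$(ii), both you and the paper take $(m,s)=(2,0)$; however, your ``midpoint/sum'' comparison is sketchy as written and does not by itself rule out both roots lying outside $[(a+2)^2,(a+6)^2]$. The clean fix---which is exactly what the paper does---is to evaluate $\tilde\iota(2,2;2)=12(a+1)(a+2)/[(a+3)(a+5)]>0$ and $\tilde\iota(2,2;3)=-4(a^2+22a+49)/[(a+5)(a+7)]<0$ directly; combined with $\tilde\iota(2,2;1)\leq0$ this forces two sign changes on $[1,3]$.
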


Proposition~\ref{prp:gRmu} unifies and extends results which are contained in \cite{Ga70a,Ga70b}. Concerning some central parts contained in these references, we give a new proof which avoids Descartes' rule of signs and is based on the mean value theorem instead. Lemma~\ref{lma:gRmu} is indeed implied by Proposition~\ref{prp:gRmu} because $(\alpha,\beta)\in V$ implies $a>-11/8+\sqrt{73}/8$ \cite{Ga70b}; for the sake of completeness, we give the following short proof of the latter implication: let $(\alpha,\beta)\in V$. If $a\geq7/2+\sqrt{145}/2$, then the inequality $a>-11/8+\sqrt{73}/8$ is trivially true, so assume that $a<7/2+\sqrt{145}/2$ in the following. Since $7/2-\sqrt{145}/2<-1$, $(\alpha,\beta)$ satisfies the estimation
\begin{align*}
0&<\frac{1}{(a+3)(a+5)}\left[\left(\frac{7}{2}+\frac{1}{2}\sqrt{145}\right)-a\right]\left[a-\left(\frac{7}{2}-\frac{1}{2}\sqrt{145}\right)\right]=\\
&=2-3\frac{(a+1)(a+2)}{(a+3)(a+5)}
\end{align*}
and, consequently, the estimation
\begin{align*}
0&\leq a^2+\left[2-3\frac{(a+1)(a+2)}{(a+3)(a+5)}\right]b^2+3a<\\
&<a^2+\left[2-3\frac{(a+1)(a+2)}{(a+3)(a+5)}\right](a+1)^2+3a=\\
&=4\frac{(a+2)}{(a+3)(a+5)}(4a^2+11a+3)=\\
&=16\frac{(a+2)}{(a+3)(a+5)}\left[a-\left(-\frac{11}{8}+\frac{1}{8}\sqrt{73}\right)\right]\left[a-\left(-\frac{11}{8}-\frac{1}{8}\sqrt{73}\right)\right].
\end{align*}
Since $-11/8-\sqrt{73}/8<-1$, we obtain that $a>-11/8+\sqrt{73}/8$.

\begin{proof}[Proof (Proposition~\ref{prp:gRmu})]
We divide our proof into three steps. Step 1 and Step 2 deal with the equivalence ``(i) $\Leftrightarrow$ (ii)'', and Step 3 deals with the second part of the proposition. The most interesting part is the direction ``(ii) $\Rightarrow$ (i)'', which was obtained via Descartes' rule of signs in \cite{Ga70b}. As announced above, we present an alternative approach which just uses the mean value theorem.\\

\textit{Step 1:} let $b\neq0$, $m\in\mathbb{N}$ and $s\in\mathbb{N}_0$. Furthermore, in this step we additionally assume that $s\neq0$ or $a\geq0$, and we write
\begin{equation*}
\iota(m,m+s;j)=b[f(j+1)-f(j)]
\end{equation*}
with $f:[1,2m]\rightarrow(0,\infty)$,
\begin{equation*}
f(j):=(2m-j+1)(2m+2s+j+2a-1)\frac{2s+j}{2s+2j+a-1}j.
\end{equation*}
Our aim is to show that the function $\iota(m,m+s;.)$ has at most one zero. If $\iota(m,m+s;.)$ had two different zeros $j_1,j_2\in[1,2m-1]$, $j_1<j_2$, then
\begin{equation}\label{eq:meanvalueI}
0=f(j_1+1)-f(j_1)=f(j_2+1)-f(j_2)
\end{equation}
and therefore
\begin{equation}\label{eq:meanvalueII}
f(j_2+1)-f(j_1+1)=f(j_2)-f(j_1).
\end{equation}
In the following, we want to conclude that there are $j_1^{\prime},j_2^{\prime}\in(1,2m)$ with $j_1^{\prime}<j_2^{\prime}$ and $0=f^{\prime}(j_1^{\prime})=f^{\prime}(j_2^{\prime})$ (which will yield a contradiction). We distinguish two cases.
\begin{itemize}
\item On the one hand, if $j_2\geq j_1+1$, then \eqref{eq:meanvalueI} and the mean value theorem immediately yield the existence of $j_1^{\prime}\in(j_1,j_1+1)$ and $j_2^{\prime}\in(j_2,j_2+1)$ with $0=f^{\prime}(j_1^{\prime})=f^{\prime}(j_2^{\prime})$.
\item On the other hand, if $j_2<j_1+1$, then \eqref{eq:meanvalueII} and the mean value theorem yield $j_1^{\prime\prime}\in(j_1,j_2)$ and $j_2^{\prime\prime}\in(j_1+1,j_2+1)$ such that
\begin{equation*}
f^{\prime}(j_1^{\prime\prime})=f^{\prime}(j_2^{\prime\prime})=\frac{f(j_2)-f(j_1)}{j_2-j_1}.
\end{equation*}
Now if $f(j_2)=f(j_1)$, then we have $0=f^{\prime}(j_1^{\prime})=f^{\prime}(j_2^{\prime})$ for $j_1^{\prime}:=j_1^{\prime\prime}$ and $j_2^{\prime}:=j_2^{\prime\prime}$. If, however, $f(j_2)>f(j_1)$, then $f^{\prime}(j_1^{\prime\prime})=f^{\prime}(j_2^{\prime\prime})>0$, and another application of the mean value theorem yields some $j_3^{\prime\prime}\in(j_2,j_1+1)$ with
\begin{equation*}
f^{\prime}(j_3^{\prime\prime})=\frac{f(j_1+1)-f(j_2)}{j_1+1-j_2}=\frac{f(j_1)-f(j_2)}{j_1+1-j_2}<0,
\end{equation*}
so we can find $j_1^{\prime}\in(j_1^{\prime\prime},j_3^{\prime\prime})$, $j_2^{\prime}\in(j_3^{\prime\prime},j_2^{\prime\prime})$ such that $0=f^{\prime}(j_1^{\prime})=f^{\prime}(j_2^{\prime})$. Finally, if $f(j_2)<f(j_1)$, we can conclude in an analogous way.
\end{itemize}
Hence, in any case there are $j_1^{\prime},j_2^{\prime}\in(1,2m)$ with $j_1^{\prime}<j_2^{\prime}$ and $0=f^{\prime}(j_1^{\prime})=f^{\prime}(j_2^{\prime})$. To obtain a contradiction, we now decompose $f=u v$ with $u,v:[1,2m]\rightarrow(0,\infty)$,
\begin{align*}
u(j)&:=(2m+2s+j+2a-1)(2s+j)j,\\
v(j)&:=\frac{2m-j+1}{2s+2j+a-1}.
\end{align*}
For any $j\in[1,2m]$, one has $f^{\prime}(j)=0$ if and only if $u^{\prime}(j)/u(j)+v^{\prime}(j)/v(j)=0$, or equivalently
\begin{equation}\label{eq:gRplusmumonotony}
\begin{split}
\frac{1}{2m-j+1}&=\frac{1}{2m+2s+j+2a-1}+\frac{1}{2s+j}+\frac{1}{j}-\frac{2}{2s+2j+a-1}=\\
&=\frac{1}{2m+2s+j+2a-1}+\frac{1}{2s+j}+\frac{2s+a-1}{j(2s+2j+a-1)}.
\end{split}
\end{equation}
At this stage, we distinguish two cases again.\\

\textit{Case 1:} $s\neq0$. Since $2s+a-1>0$, we see that the right-hand side of equation \eqref{eq:gRplusmumonotony} is strictly decreasing w.r.t. $j\in[1,2m]$, whereas the left-hand side is strictly increasing. We thus obtain that $f^{\prime}$ can have at last one zero, a contradiction.\\

\textit{Case 2:} $s=0$. Then, by the additional assumption, $a\geq0$. Moreover, \eqref{eq:gRplusmumonotony} reduces to
\begin{equation*}
\frac{1}{2m-j+1}=\frac{1}{2m+j+2a-1}+\frac{1}{j}+\frac{a-1}{j(2j+a-1)}
\end{equation*}
or, equivalently,
\begin{equation*}
\frac{2j+2a-2}{(2m-j+1)(2m+j+2a-1)}=\frac{2j+2a-2}{j(2j+a-1)}.
\end{equation*}
Since $a\geq0$, every zero of $f^{\prime}|_{(1,2m]}$ must therefore satisfy
\begin{equation*}
(2m-j+1)(2m+j+2a-1)=j(2j+a-1).
\end{equation*}
We now define $\eta:[1,2m]\rightarrow\mathbb{R}$,
\begin{equation*}
\eta(j):=j(2j+a-1)-(2m-j+1)(2m+j+2a-1).
\end{equation*}
Since
\begin{equation*}
\eta^{\prime}(j)=6j+3a-3>0
\end{equation*}
for all $j\in[1,2m]$, the function $\eta$ is strictly increasing and we obtain that $f^{\prime}|_{(1,2m]}$ can have at last one zero, which yields a contradiction again.\\

Hence, if ($b\neq0$, $m\in\mathbb{N}$, $s\in\mathbb{N}_0$ and) at least one of the additional conditions $s\neq0$ or $a\geq0$ holds, then $\iota(m,m+s;.)$ has at most one zero.\\

\textit{Step 2:} let $b\neq0$ again. For every $m\in\mathbb{N}$ we rewrite
\begin{equation*}
\iota(m,m;j)=-\frac{b}{(2j+a-1)(2j+a+1)}\chi_m(j)
\end{equation*}
with $\chi_m:[1,2m-1]\rightarrow\mathbb{R}$,
\begin{align*}
\chi_m(j):&=(2m-j+1)(2m+j+2a-1)j^2(2j+a+1)\\
&\quad-(2m-j)(2m+j+2a)(j+1)^2(2j+a-1)=\\
&=(4m+a+1)(2m+j+2a-1)j^2\\
&\quad-(2m-j)(2j+a-1)[(2m+j+2a)(2j+1)+j^2].
\end{align*}
In view of Step 1, concerning the assertion ``(i) $\Leftrightarrow$ (ii)'' it is left to establish the directions ``(i') $\Rightarrow$ (ii)'' and ``(ii') $\Rightarrow$ (i')'' with
\begin{enumerate}
\item[\rm(i')] For all $m\in\mathbb{N}$, $\chi_m$ has at most one zero in $[1,2m-1]$.
\item[\rm(ii')] $a\in(-11/8+\sqrt{73}/8,0)$.
\end{enumerate}
We consider $\chi_m$ on the whole real line (by canonical extension) and conclude as follows:\\

``(i') $\Rightarrow$ (ii)'': if $a\leq-11/8+\sqrt{73}/8$, then
\begin{align*}
\chi_2(1)&=-16a^2-44a-12=\\
&=-16\left[a-\left(-\frac{11}{8}+\frac{1}{8}\sqrt{73}\right)\right]\left[a-\left(-\frac{11}{8}-\frac{1}{8}\sqrt{73}\right)\right]\geq\\
&\geq0,\\
\chi_2(2)&=-12(a+1)(a+2)<0,\\
\chi_2(3)&=4a^2+88a+196>0.
\end{align*}
Consequently, $\chi_2$ has both a zero in $[1,2)$ and a zero in $(2,3)$, which violates (i').\\

``(ii') $\Rightarrow$ (i')'': the case $m=1$ is trivial, so let $m\geq2$. Then the estimation $a\in(-1/3,0)$ implies
\begin{align*}
\chi_m(-1)&=(4a-4)(m+1)(m+a-1)<0,\\
\chi_m(0)&=(4-4a)m(m+a)>0,\\
\chi_m(1)&=-(4+12a)m(m+a)+4(a+1)(2a+1)\leq\\
&\leq-(4+12a)\cdot2(2+a)+4(a+1)(2a+1)=\\
&=\chi_2(1)=\\
&=-16\left[a-\left(-\frac{11}{8}+\frac{1}{8}\sqrt{73}\right)\right]\left[a-\left(-\frac{11}{8}-\frac{1}{8}\sqrt{73}\right)\right]<\\
&<0.
\end{align*}
Hence, together with $\lim_{j\to-\infty}\chi_m(j)=\infty$ we obtain that $\chi_m$ has a zero in $(-\infty,-1)$, that $\chi_m$ has a zero in $(-1,0)$ and that $\chi_m$ has also a zero in $(0,1)$. As a polynomial in $j$ of degree four, however, this implies that $\chi_m$ can have at most one zero in $[1,2m-1]$.\\

\textit{Step 3:} the second part of the proposition is a consequence of the representations
\begin{align*}
&\iota(m,m+s;1)=\\
&=\frac{4b}{(2s+a+1)(2s+a+3)}\\
&\quad\times\left[((2s+3)(2s+a+1)-(4s+2))(m-1)(m+s+a+1)+a(a+1)\right]=\\
&=\frac{4b}{(2s+a+1)(2s+a+3)}\\
&\quad\times\left[((2s+3)(2s+a+1)-(4s+2))(m-2)(m+s+a+2)\right.\\
&\quad\quad\left.+(2(s+a+2)(2s+a+2)+4s+5a+5)s+4a^2+11a+3\right],
\end{align*}
the estimation
\begin{equation*}
(2s+3)(2s+a+1)-(4s+2)>(2s+3)\left(2s-\frac{1}{3}+1\right)-(4s+2)=\frac{2}{3}s(6s+5)\geq0
\end{equation*}
(which is satisfied for all $a\geq-11/8+\sqrt{73}/8>-1/3$) and the factorization
\begin{equation*}
4a^2+11a+3=4\left[a-\left(-\frac{11}{8}+\frac{1}{8}\sqrt{73}\right)\right]\left[a-\left(-\frac{11}{8}-\frac{1}{8}\sqrt{73}\right)\right].
\end{equation*}
\end{proof}

\section{Correction of Rahman's hypergeometric representations}

This short section contains the announced corrections of small mistakes in Rahman's hypergeometric representations \cite[(1.7) to (1.9)]{Ra81a} of the linearization coefficients $g_R(m,n;k)$ (which belong to the Jacobi polynomials $(R_n^{(\alpha,\beta)}(x))_{n\in\mathbb{N}_0}$). For all $m\in\mathbb{N}$ and $s\in\mathbb{N}_0$, one has
\begin{equation}\label{eq:Rahmaneven}
\begin{split}
&g_R(m,m+s;s+j)=\\
&=\frac{\alpha+\beta+1+2s+2j}{\alpha+\beta+1}(m+\alpha+\beta+1)_m\\
&\quad\times\frac{(\alpha+1)_{s+j}(\beta+1)_{m+s}(\alpha+\beta+1)_{2s+j}(\alpha+\beta+1)_j(m+s)!}{(\alpha+1)_s(\alpha+1)_m(\beta+1)_{s+j}(\alpha+\beta+2)_{2m+2s+j}s!j!}\\
&\quad\times\frac{(-m)_{\frac{j}{2}}(\alpha+\beta+m+s+1)_{\frac{j}{2}}}{\left(-m-\frac{\alpha+\beta}{2}\right)_{\frac{j}{2}}(\alpha+s+1)_{\frac{j}{2}}}\\
&\quad\times\frac{(-m-\alpha)_{\frac{j}{2}}(\beta+m+s+1)_{\frac{j}{2}}\left(\frac{1}{2}\right)_{\frac{j}{2}}}{\left(\frac{1}{2}-m-\frac{\alpha+\beta}{2}\right)_{\frac{j}{2}}(s+1)_{\frac{j}{2}}(\alpha+1)_{\frac{j}{2}}}\\
&\quad\times{}_9F_8\left(\begin{matrix}\alpha,1+\frac{\alpha}{2},\alpha+\frac{1}{2},\frac{\alpha-\beta}{2},\frac{\alpha-\beta+1}{2},\alpha+\beta+m+s+1+\frac{j}{2}, \\ \frac{\alpha}{2},\frac{1}{2},\frac{\alpha+\beta}{2}+1,\frac{\alpha+\beta+1}{2},-\beta-m-s-\frac{j}{2},\end{matrix}\right.\\
&\quad\quad\quad\quad\quad\left.\left.\begin{matrix}-m+\frac{j}{2},-s-\frac{j}{2},-\frac{j}{2} \\ \alpha+m+1-\frac{j}{2},\alpha+s+1+\frac{j}{2},\alpha+1+\frac{j}{2}\end{matrix}\right|1\right)
\end{split}
\end{equation}
for even $j\in\{0,\ldots,2m\}$ and
\begin{equation}\label{eq:Rahmanodd}
\begin{split}
&g_R(m,m+s;s+j)=\\
&=\frac{\alpha+\beta+1+2s+2j}{\alpha+\beta+1}(m+\alpha+\beta+1)_m\\
&\quad\times\frac{(\alpha+1)_{s+j}(\beta+1)_{m+s}(\alpha+\beta+1)_{2s+j}(\alpha+\beta+1)_j(m+s)!}{(\alpha+1)_s(\alpha+1)_m(\beta+1)_{s+j}(\alpha+\beta+2)_{2m+2s+j}s!j!}\\
&\quad\times\frac{(-m)_{\frac{j+1}{2}}(\alpha+\beta+m+s+1)_{\frac{j+1}{2}}}{\left(-m-\frac{\alpha+\beta}{2}\right)_{\frac{j+1}{2}}(\alpha+s+1)_{\frac{j+1}{2}}}\\
&\quad\times\frac{(-m-\alpha)_{\frac{j-1}{2}}(\beta+m+s+1)_{\frac{j-1}{2}}\left(\frac{3}{2}\right)_{\frac{j-1}{2}}}{\left(\frac{1}{2}-m-\frac{\alpha+\beta}{2}\right)_{\frac{j-1}{2}}(s+1)_{\frac{j-1}{2}}(\alpha+2)_{\frac{j-1}{2}}}\\
&\quad\times\frac{\alpha-\beta}{\alpha+\beta+1}{}_9F_8\left(\begin{matrix}\alpha+1,\frac{\alpha+3}{2},\alpha+\frac{1}{2},\frac{\alpha-\beta}{2}+1,\frac{\alpha-\beta+1}{2}, \\ \frac{\alpha+1}{2},\frac{3}{2},\frac{\alpha+\beta}{2}+1,\frac{\alpha+\beta+3}{2},\end{matrix}\right.\\
&\quad\quad\quad\quad\quad\quad\quad\quad\quad\left.\left.\begin{matrix}\alpha+\beta+m+s+\frac{3}{2}+\frac{j}{2},-m+\frac{1}{2}+\frac{j}{2},\frac{1}{2}-s-\frac{j}{2},\frac{1-j}{2} \\ \frac{1-j}{2}-\beta-m-s,\alpha+m+\frac{3}{2}-\frac{j}{2},\alpha+s+\frac{3}{2}+\frac{j}{2},\alpha+\frac{3}{2}+\frac{j}{2}\end{matrix}\right|1\right)
\end{split}
\end{equation}
for odd $j\in\{0,\ldots,2m\}$, which corrects \cite[(1.7), (1.8)]{Ra81a}. This shows the nonnegativity of the $g_R(m,n;k)$ for $(\alpha,\beta)\in\Delta$, as well as the strict positivity for $(\alpha,\beta)\in\Delta^{\circ}$. For the subcase $\alpha\geq\beta\geq-1/2$, the nonnegativity of the $g_R(m,n;k)$ can also seen via the representation
\begin{equation}\label{eq:Rahmanspecial}
\begin{split}
g_R(m,m+s;s+j)&=\frac{\alpha+\beta+1+2s+2j}{\alpha+\beta+1}\cdot\frac{(m+s)!}{s!j!}\\
&\quad\times\frac{(\beta+1)_{m+s}(\alpha+\beta+1)_{2m}}{(\alpha+1)_m(\beta+1)_s(\alpha+\beta+1)_m}\\
&\quad\times\frac{(\alpha+\beta+1)_{2s+j}(-2m)_j(2\alpha+2\beta+2m+2s+2)_j}{(\alpha+\beta+2)_{2m+2s+j}(-2m-\alpha-\beta)_j}\\
&\quad\times\frac{(\alpha-\beta)_j}{(2\beta+2s+2)_j}\\
&\quad\times{}_9F_8\left(\begin{matrix}\beta+s+\frac{1}{2},1+\frac{\beta+s+\frac{1}{2}}{2},\beta+\frac{1}{2},\beta+m+s+1,-m-\alpha, \\ \frac{\beta+s+\frac{1}{2}}{2},s+1,-m+\frac{1}{2},\alpha+\beta+m+s+\frac{3}{2},\end{matrix}\right.\\
&\quad\quad\quad\quad\quad\left.\left.\begin{matrix}\frac{\alpha+\beta+1}{2}+s+\frac{j}{2},\frac{\alpha+\beta+2}{2}+s+\frac{j}{2},\frac{1-j}{2},-\frac{j}{2} \\ \frac{\beta-\alpha}{2}+\frac{2-j}{2},\frac{\beta-\alpha}{2}+\frac{1-j}{2},\beta+s+1+\frac{j}{2},\beta+s+\frac{3}{2}+\frac{j}{2}\end{matrix}\right|1\right),
\end{split}
\end{equation}
which is valid for all $m\in\mathbb{N}$, $s\in\mathbb{N}_0$ and $j\in\{0,\ldots,2m\}$ and which corrects a typo in \cite[(1.9)]{Ra81a}. Note that the expressions in \eqref{eq:Rahmaneven} to \eqref{eq:Rahmanspecial} may not be well-defined if $(\alpha,\beta)$ is an element of the boundary of $\Delta$; in this case, the formulas have to be interpreted as limits.

\bibliography{bibliographyjacobigencheb}
\bibliographystyle{amsplain}

\end{document}